\newcommand{\Ad}{\operatorname{Ad}}
\newcommand{\id}{\operatorname{id}}
\newcommand{\Br}{\operatorname{Br}}
   \theoremstyle{plain}
   \newtheorem{thm}{Theorem}[section]
   \newtheorem{prop}[thm]{Proposition}
   \newtheorem{lemma}[thm]{Lemma}  
   \newtheorem{cor}[thm]{Corollary}
   \theoremstyle{definition}
   \newtheorem{example}[thm]{Example}
   \theoremstyle{remark}
   \newtheorem{remark}[thm]{Remark}
\newtheorem{q}[thm]{Question}
\definecolor{mybgcolor}{gray}{0.8}
\definecolor{myframecolor}{rgb}{.647,.129,.149}
\newmdenv[style=mystyle]{important}
   \numberwithin{equation}{section}
        \date{\today}
        \thanks{2010 Mathematics Subject Classification: \ 47D06, \ 82B10}
\title[Phase transition]{Phase transition in the CAR algebra}
\author{Klaus Thomsen}
\date{\today}
\email{matkt@math.au.dk}
\address{Department of Mathematics, Aarhus University, Ny Munkegade, 8000 Aarhus C, Denmark}
\begin{document}

\maketitle
\begin{center} \emph{I mindet om Ola Bratteli}
\end{center}

\bigskip

\begin{abstract} The paper develops a method to construct one-parameter groups of automorphisms on a UHF algebra with a prescribed field of KMS states.
 
\end{abstract}

\section{Introduction} 

Let $\beta$ be a real number. A state $\omega$ of a $C^*$-algebra $A$ is a $\beta$-KMS state for a given continuous one-parameter group of automorphisms $\alpha = (\alpha_t)_{t\in \mathbb R}$ on $A$ when the identity
\begin{equation}\label{KMS}
\omega(a b) = \omega(b \alpha_{i\beta}(a))
\end{equation}
holds for all elements $a$ and $b$ that are analytic for $\alpha$, cf. \cite{BR}.
 Due to the interpretation of the KMS condition in models from quantum statistical mechanics and in particular the Ising model, the early history of the condition contains much work on KMS states for one-parameter groups on the CAR $C^*$-algebra and more general UHF algebras, \cite{Ar1}, \cite{Ar2}, \cite{PS},\cite{S1},\cite{S2},\cite{S3},\cite{S4}. None of these papers contain examples with more than one $\beta$-KMS state for each $\beta$, but Theorem 6.2.48 and Theorem 6.2.49 in Bratteli and Robinsons monograph \cite{BR} describe examples of one-parameter groups on the CAR algebra for which there are at least two $\beta$-KMS states for all sufficiently large values of $\beta$. In the notes to Section 6.2.6 in the first edition of \cite{BR} \footnote{Both theorems and the Notes and Remarks in Section 6.2.6 were edited for the second edition.}  the authors explain that the arguments underlying these examples are adopted from work by Peierls from 1936 and by Dobrushin from 1965 which both deal with classical statistical mechanics where $C^*$-algebras are absent. Subsequent work by Bratteli, Elliott, Herman and Kishimoto, \cite{BEH}, \cite{BEK}, based on the classification of AF algebras in terms of ordered K-theory, showed that very complicated variation with $\beta$ of the structure of $\beta$-KMS states can occur for actions on certain simple unital $C^*$-algebras; specifically, for the dual action restricted to a corner in a crossed product coming from a carefully chosen automorphism of an equally well-chosen AF algebra. But it was not until the work of Kishimoto in \cite{Ki} that one could begin to suspect that a more complicated KMS structure can occur in UHF algebras; see in particular Proposition 4.4 and Proposition 4.6 in \cite{Ki}. 
 
 It is the purpose with this paper to exhibit a method which makes it possible to piece together simplex cones to realize an almost arbitrary variation of $\beta$-KMS states for one-parameter groups of automorphisms on a UHF algebra. The actions we work with are the so-called generalized gauge actions which were studied in \cite{Th1} and \cite{Th2}, and parts of the constructions we perform are based on ideas from \cite{Th2}, but except for some basic standard references the present paper is self-contained. The constructions proceed in two steps, where the first step is an expansion of the methods from Section 9 of \cite{Th2} where they were used to build strongly connected graphs such that the gauge action on the corresponding graph $C^*$-algebra has the wildest possible variation with $\beta$ of the simplexes of KMS states. The inverse temperatures $\beta$ which can occur for a generalized gauge action on the $C^*$-algebra of a strongly connected graph are either all positive or all negative, while AF algebras admit one-parameter groups of automorphisms for which the simplexes of $\beta$-KMS states are non-trivial for all real $\beta$. For this reason it is necessary to develop the methods from \cite{Th2} further. We do this by using potentials that take both positive and negative values and show that ideas behind the methods from \cite{Th2} can be made to work using Bratteli diagrams only. This allows to control both the positive and the negative $\beta$, cf. Proposition \ref{13-10-18c}. In the second step we show that the structure of KMS states realized by a generalized gauge action on an AF algebra can also be realized by a generalized gauge action on any given UHF algebra, cf. Theorem \ref{25-09-18f}. A major tool for this is an approximate intertwining argument for inverse limits which is reminiscent of Elliott's approximate intertwining argument for inductive limits. In combination the two steps give a powerful method to construct generalized gauge actions on a UHF algebra with a prescribed structure of KMS states. In particular, it allows us to show that the extreme variation of KMS states which in \cite{Th2} was realized for the gauge action on the $C^*$-algebra of certain strongly connected graphs can also occur for generalized gauge actions on a UHF algebra, where the simplexes are now non-empty and different for all real $\beta$:

\begin{thm}\label{CAR2} Let $U$ be a UHF algebra. There is a generalized gauge action $\alpha$ on $U$ such that for all $\beta \neq 0$ the simplex $S^{\alpha}_{\beta}$ of $\beta$-KMS states for $\alpha$ is an infinite dimensional Bauer simplex and $S^{\alpha}_{\beta}$ is not affinely homeomorphic to $S^{\alpha}_{\beta'}$ when $\beta \neq \beta'$.
\end{thm}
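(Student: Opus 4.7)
The plan is to combine the two main technical tools highlighted in the introduction: Proposition \ref{13-10-18c}, which builds a Bratteli diagram (and hence an AF algebra) whose generalized gauge action realizes a prescribed ``wild'' KMS structure for both positive and negative $\beta$, and Theorem \ref{25-09-18f}, which transports such a KMS structure from an AF algebra to any prescribed UHF algebra by an approximate intertwining of inverse limits. The theorem then follows by choosing the Bratteli diagram in the first step so that the field $\beta\mapsto S^\alpha_\beta$ consists of pairwise non-homeomorphic infinite-dimensional Bauer simplexes on each of $(-\infty,0)$ and $(0,\infty)$.

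First I would construct the AF model. Using Proposition \ref{13-10-18c} with a potential that takes both positive and negative values, I would specify, for every nonzero real $\beta$, a compact Hausdorff space $X_\beta$ and arrange the combinatorics and weights of the Bratteli diagram so that the $\beta$-KMS simplex for the resulting generalized gauge action is affinely homeomorphic to the Bauer simplex $M_1(X_\beta)$ of probability measures on $X_\beta$. The family $\{X_\beta\}_{\beta\neq 0}$ has to be chosen so that each $X_\beta$ is infinite (guaranteeing infinite dimension of the Bauer simplex) and so that the $X_\beta$ are pairwise non-homeomorphic; since a Bauer simplex is determined up to affine homeomorphism by the homeomorphism type of its extreme boundary, this ensures $S^\alpha_\beta\not\simeq S^\alpha_{\beta'}$ when $\beta\neq\beta'$. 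A convenient choice is to let $X_\beta$ be a Cantor-like space whose isomorphism invariant (for instance the set of isolated-point structure, a $\beta$-dependent finite-dimensional topological gadget, or simply a parameter coded into the diagram at each level) varies continuously with $\beta$; Section 9 of \cite{Th2} already furnishes this kind of parametrization for strongly connected graphs, and the extension in Proposition \ref{13-10-18c} adapts it to Bratteli diagrams so that the same control is available simultaneously on both sides of $\beta=0$.

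Having produced such an AF algebra $A$ with a generalized gauge action $\alpha_A$ realizing this KMS structure, I would invoke Theorem \ref{25-09-18f}, which says precisely that the structure of $\beta$-KMS states for a generalized gauge action on an AF algebra can be realized by a generalized gauge action on any prescribed UHF algebra $U$. Applying it to the given $U$ and to $(A,\alpha_A)$ yields a generalized gauge action $\alpha$ on $U$ whose field $\beta\mapsto S^\alpha_\beta$ is affinely homeomorphic, $\beta$ by $\beta$, to the one carried by $(A,\alpha_A)$. In particular each $S^\alpha_\beta$ is an infinite-dimensional Bauer simplex, and for $\beta\neq\beta'$ they are not affinely homeomorphic, which is the conclusion.

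The routine part is the assembly; the real content lies in the first step, i.e.\ in using Proposition \ref{13-10-18c} to arrange an uncountable family of pairwise non-homeomorphic Bauer bases simultaneously. The main obstacle I anticipate is controlling the \emph{two-sided} parameter $\beta$: the construction must use potentials that oscillate in sign so that the positive-$\beta$ side and the negative-$\beta$ side can be designed independently, while still fitting inside a single Bratteli diagram whose combinatorics remain compatible with the inverse-limit intertwining in Theorem \ref{25-09-18f}. Once this bookkeeping is handled at the level of the AF algebra, the passage to the UHF algebra $U$ is a direct application of Theorem \ref{25-09-18f}.
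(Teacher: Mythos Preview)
There is a genuine gap in how you plan to invoke Proposition \ref{13-10-18c}. That proposition does \emph{not} let you prescribe, for each real $\beta$, a target simplex $M_1(X_\beta)$; its input is a \emph{countable} collection $\mathbb I$ of intervals together with one simplex cone $L_I$ per interval, and the output at a given $\beta$ is assembled from the cones $L_I$ with $\beta\in I$. So your sentence ``specify, for every nonzero real $\beta$, a compact Hausdorff space $X_\beta$'' asks the proposition for something it does not deliver: the data you may feed in is countable, while you want uncountably many pairwise distinct simplexes.

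The missing idea, which the paper supplies, is to encode the uncountable family through a countable \emph{separating} family of intervals. One chooses a countable $\mathbb I$ containing $I_0=\mathbb R$ such that $\mathbb I_\beta\neq\mathbb I_{\beta'}$ whenever $\beta\neq\beta'$ (e.g.\ all rational-endpoint intervals), assigns to $I_0$ a one-point space and to each $I\neq I_0$ an infinite compact \emph{connected} metric space $X_I$ with the $X_I$ pairwise non-homeomorphic, and lets $S_I=M_1(X_I)$. Then Corollary \ref{25-10-18k} (i.e.\ Proposition \ref{13-10-18c} followed by Theorem \ref{25-09-18f}) together with Proposition \ref{13-11-18} gives a generalized gauge action on $U$ for which $\partial_e S^\alpha_\beta$ is the one-point compactification of $\bigsqcup_{I\in\mathbb I_\beta\setminus\{I_0\}} X_I$. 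Connectedness of the $X_I$ is what lets you read off the set $\{X_I:I\in\mathbb I_\beta\}$ from this compactification as its non-degenerate connected components, so that $S^\alpha_\beta\simeq S^\alpha_{\beta'}$ forces $\mathbb I_\beta=\mathbb I_{\beta'}$ and hence $\beta=\beta'$. Note in particular that your suggestion of ``Cantor-like'' spaces would not survive this step: totally disconnected pieces cannot be recovered from the disjoint union after compactification. The two-sided control of $\beta$ that you flag as the main obstacle is already built into Proposition \ref{13-10-18c} and is not where the difficulty lies.
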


The one-parameter groups of automorphisms we construct have presumably no physical significance, but it should be observed that they are well-behaved from a mathematical point of view; they are locally representable in the sense of Kishimoto, \cite{Ki}, and their generators are commutative normal derivations in the sense of Sakai, \cite{S2}.

\bigskip

\emph{Acknowledgement} I am grateful to Sergey Neshveyev for reminding me of the examples in \cite{BR} and to Derek Robinson for remarks on his book with Ola Bratteli.  And I thank Johannes Christensen for reading and commenting on earlier versions of the paper. The work was supported by the DFF-Research Project 2 `Automorphisms and Invariants of Operator Algebras', no. 7014-00145B.

\section{Generalized gauge actions on AF  $C^*$-algebras}

Let $\Gamma$ be a countable directed graph with vertex set $\Gamma_V$ and arrow set
$\Gamma_{Ar}$. For an arrow $a \in \Gamma_{Ar}$ we denote by $s(a) \in \Gamma_V$ its source and by
$r(a) \in \Gamma_V$ its range. In this paper we consider only graphs that are also row-finite without sinks, meaning that every vertex admits at least one and at most finitely many arrows, i.e.
$$
1 \leq \# s^{-1}(v) < \infty
$$
for all vertexes. A finite path in $\Gamma$ is an element
$\mu = (a_i)_{i=1}^{n}  \in \left(\Gamma_{Ar}\right)^{n}$ such that $r(a_i) = s(a_{i+1})$ for all
$i \leq n-1$ and $|\mu| = n$ is the length of $\mu$. We define the source $s(\mu)$ and range $r(\mu)$ of $\mu$ such that $s(\mu) = s(a_1)$ and $r(\mu) = r(a_n)$. A vertex $v \in \Gamma_V$ will be considered as
a finite path of length $0$. The $C^*$-algebra $C^*(\Gamma)$ of the graph $\Gamma$ was introduced in this generality in \cite{KPRR} and it is the universal
$C^*$-algebra generated by a collection $S_a, a \in \Gamma_{Ar}$, of partial
isometries and a collection $P_v, v \in \Gamma_V$, of mutually orthogonal projections subject
to the conditions that
\begin{enumerate}
\item[1)] $S^*_aS_a = P_{r(a)}, \ \forall a \in \Gamma_{Ar}$, and
\item[2)] $P_v = \sum_{a  \in s^{-1}(v)} S_aS_a^*, \ \forall v \in \Gamma_V$.
\end{enumerate} 
For a finite path $\mu = (a_i)_{i=1}^{|\mu|}$ of positive length we set
$$
S_{\mu} = S_{a_1}S_{a_2}S_{a_3} \cdots S_{a_{|\mu|}} \ ,
$$
while $S_{\mu} = P_v$ when $\mu$ is the vertex $v$. The elements $S_{\mu}S_{\nu}^*$, where $\mu,\nu$ range over all finite paths in $\Gamma$, span a dense $*$-sub-algebra in $C^*(\Gamma)$.

A function $F :  \Gamma_{Ar} \to \mathbb R$ will be called a potential on $\Gamma$ in the following. Using it we can define a continuous one-parameter group $\alpha^F = \left(\alpha^F_t\right)_{t \in \mathbb R}$ of automorphisms on $C^*(\Gamma)$ such that \label{alphaF}
$$
\alpha^F_t(S_a) = e^{i F(a) t} S_a
$$
for all $a \in \Gamma_{Ar}$ and 
$$
\alpha^F_t(P_v) = P_v
$$
for all $v \in \Gamma_V$. We call $\alpha^F$ a generalized gauge action.

 A Bratteli diagram $\Br$ is a special type of directed graph in which the vertex set $\Br_V$ is partitioned into level sets,
$$
\Br_V = \sqcup_{n=0}^{\infty} \Br_{n} \ ,
$$
with a finite number of vertexes in the $n$'th level $\Br_{n}$ such that $\Br_{0}$ consists of a single vertex $v_0$ which we call the top vertex in the following, and the arrows emitted from $\Br_n$ end in $\Br_{n+1}$, i.e. $r\left(s^{-1}(\Br_n)\right) \subseteq \Br_{n+1}$ for all $n$, \cite{Br}. Also, as is customary, we assume that a vertex in $\Br$ at most emits finitely many arrows, that $v_0$ is the only source in $\Br$ and that there are no sinks. We denote by $AF(\Br)$ the approximately finite dimensional (or  AF) $C^*$-algebra which was associated to $\Br$ by Bratteli in \cite{Br}. Let $\mathcal P_n$ denote the set of finite paths $\mu$ in $\Br$ emitted from $v_0$ and of length $n$, i.e. $s(\mu) = v_0$ and $|\mu| = n$. Let
$\mathcal P^{(2)}_n = \left\{ (\mu,\mu') \in \mathcal P_n \times \mathcal P_n : \ r(\mu) = r(\mu') \right\}$ and set
$$
E^n_{\mu,\mu'} = S_{\mu}S_{\mu'}^*  \ 
$$ 
when $(\mu,\mu') \in \mathcal P^{(2)}_n$. Then 
\begin{equation}\label{22-09-18}
\{E^n_{\mu,\mu'}\}_{(\mu,\mu') \in \mathcal P^{(2)}_n}
\end{equation}
are matrix units in $P_{v_0}C^*(\Br)P_{v_0}$ and $\sum_{\mu \in \mathcal P_n} E^n_{\mu,\mu} = P_{v_0}$. Let $\mathbb F_n$ be the $C^*$-sub-algebra of $P_{v_0}C^*(\Br)P_{v_0}$ spanned by the matrix units \eqref{22-09-18}. Then $\mathbb F_n \subseteq \mathbb F_{n+1}$ and
$$
P_{v_0}C^*(\Br)P_{v_0} = \overline{\bigcup_n \mathbb F_n} \ .
$$
Since the Bratteli diagram of the tower 
$\mathbb C \subseteq \mathbb F_1 \subseteq \mathbb F_2 \subseteq \mathbb F_3 \subseteq \cdots $ is $\Br$, it follows from \cite{Br} that $AF(\Br) \simeq P_{v_0}C^*(\Br)P_{v_0}$.

Let $ F : \Br_{Ar} \to \mathbb R$ be a potential and $\alpha^F$ the corresponding generalized gauge action of $C^*(\Br)$. Extend $F$ to finite paths $\mu = a_1a_2\cdots a_{|\mu|} \in \left(\Br_{Ar}\right)^{|\mu|}$ such that
$$
F(\mu) = \sum_{i=1}^{|\mu|} F(a_i) \ .
$$
Then
\begin{equation*}\label{22-09-18e}
\alpha^F_t(E^n_{\mu, \mu'}) = e^{it(F(\mu) - F(\mu'))} E^n_{\mu, \mu'} \ = \Ad e^{it H_n}\left(E^n_{\mu,\mu'}\right) \ ,
\end{equation*}
where 
$$
H_n = \sum_{\mu \in \mathcal P_n} F(\mu) E^n_{\mu,\mu} \in \mathbb F_n \ .
$$
In particular, by restriction the generalized gauge action $\alpha^F$ gives rise to a continuous one-parameter group of automorphisms on $AF(\Br)$ which we also denote by $\alpha^F$. In the following, by a generalized gauge action on an AF algebra $A$ we mean a continuous one-parameter group $\alpha$ on $A$ which is conjugate to the generalized gauge action $\alpha^F$ on $AF(\Br)$ for some Bratteli diagram $\Br$ and some potential $F$.

When $\alpha$ is a continuous one-parameter group of automorphisms on a unital separable $C^*$-algebra $A$ and $\beta$ a real number we denote the set of $\beta$-KMS states for $\alpha$ by $S^{\alpha}_{\beta}$, or $S^F_{\beta}$ when $A = AF(\Br)$ and $\alpha = \alpha^F$. Set
$$
\mathbb R^+ S^{\alpha}_{\beta} = \left\{ t \nu : \ t \in \mathbb R, \ t \geq 0, \ \nu \in S^{\alpha}_{\beta} \right\} \ ;
$$
the set of the positive linear functionals $\omega : A \to \mathbb C$ for which  \eqref{KMS} holds for all $\alpha$-analytic elements $a,b$. The topology we consider on $\mathbb R^+ S^{\alpha}_{\beta}$ is primarily the weak* topology and $S^{\alpha}_{\beta}$ is then a compact subset of $\mathbb R^+ S^{\alpha}_{\beta}$; in fact, a compact metrizable Choquet simplex by Theorem 5.3.30 in \cite{BR}. 

 %

\section{KMS states and projective matrix systems}

Let $\Br$ be a Bratteli diagram. A \emph{projective matrix system} over $\Br$ is a sequence $A^{(j)}, j =1,2,3,\cdots$, where 
$$
A^{(j)} = \left(A^{(j)}_{v,w}\right)_{(v,w) \in \Br_{j-1} \times \Br_j}
$$
is a non-negative real matrix over $\Br_{j-1} \times \Br_j$ subject to the condition that
$$
\left(A^{(1)}A^{(2)} \cdots A^{(k)}\right)_{v_0,w} \neq 0
$$
for all $w \in \Br_k$ and all $k \geq 1$. Let $\varprojlim_j A^{(j)}$ be the set of sequences
$$
\left(\psi^j\right)_{j=0}^{\infty} \in \prod_{j=0}^{\infty} [0,\infty)^{\Br_j} 
$$
for which $\psi^{j-1} = A^{(j)}\psi^{j}$ for all $j = 1,2,\cdots$. We consider $[0,\infty)^{\Br_j}$ as a closed subset of $\mathbb R^{\Br_j}$ and hence as a locally compact Hausdorff space, and we equip $\prod_{j=0}^{\infty} [0,\infty)^{\Br_j} $ with the corresponding product topology.

\begin{lemma}\label{21-09-18bb} 
\begin{enumerate}
\item[1)] $\psi^j_w  \leq  \left(\left(A^{(1)}A^{(2)} \cdots A^{(j)}\right)_{v_0,w}\right)^{-1}\psi^0$ for all $w \in \Br_j$ when $(\psi^j)_{j=0}^{\infty} \in \varprojlim_j A^{(j)}$.
\item[2)] $\varprojlim_j A^{(j)}$ is a closed locally compact convex cone in $ \prod_{j=0}^{\infty} [0,\infty)^{\Br_j}$; in fact, 
\begin{equation}\label{21-09-18cb}
\left\{ (\psi^j)_{j=0}^{\infty} \in \varprojlim_j A^{(j)}: \ \psi^0 \in K \right\}
\end{equation}
is a non-empty compact subset of $\varprojlim_j A^{(j)}$ for every non-empty compact subset $K \subseteq [0,\infty)$.
\end{enumerate}
\end{lemma}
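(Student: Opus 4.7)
The plan is to prove (1) by a direct backward-iteration of the compatibility, and then to extract the various assertions in (2) in stages, with non-emptiness of (\ref{21-09-18cb}) being the main work.

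For (1), I would iterate $\psi^{i-1} = A^{(i)}\psi^i$ to obtain $\psi^0 = \bigl(A^{(1)}A^{(2)}\cdots A^{(j)}\bigr)\psi^j$, which at the $v_0$ coordinate reads
\[
\psi^0 \;=\; \sum_{w \in \Br_j} \bigl(A^{(1)}\cdots A^{(j)}\bigr)_{v_0,w}\, \psi^j_w.
\]
Non-negativity of all entries makes every summand at most $\psi^0$, and dividing by the non-zero factor $(A^{(1)}\cdots A^{(j)})_{v_0,w}$ gives the asserted bound.

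For (2), closedness of $\varprojlim_j A^{(j)}$ is immediate since it is the intersection over $j$ of the closed subsets of $\prod_i [0,\infty)^{\Br_i}$ cut out by the finitely many continuous linear equations in $\psi^{j-1} = A^{(j)}\psi^j$, and closure under non-negative scaling and addition is evident. The bound from (1) shows that if $\psi^0 \in K$ then each $\psi^j_w$ lies in the interval $[0,\, (\sup K)\cdot((A^{(1)}\cdots A^{(j)})_{v_0,w})^{-1}]$, so the set (\ref{21-09-18cb}) is a closed subset of a Tychonoff-compact product and is therefore compact. Local compactness of $\varprojlim_j A^{(j)}$ then follows by taking $K = [0, \psi^0_{v_0}+1]$ at any base point $\psi$, since the preimage under the continuous projection onto the $0$-th coordinate is then a compact neighbourhood of $\psi$.

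The main obstacle is non-emptiness of (\ref{21-09-18cb}); it is enough to treat $K = \{t\}$ with $t > 0$. I would proceed by a diagonal compactness argument. For each $n \geq 1$, pick any strictly positive vector in $[0,\infty)^{\Br_n}$, back-solve via $\phi^{i-1}_{(n)} = A^{(i)}\phi^i_{(n)}$ down to $\phi^0_{(n)}$, which is strictly positive by the non-vanishing hypothesis on $(A^{(1)}\cdots A^{(n)})_{v_0,\cdot}$; rescale so that $\phi^0_{(n)} = t$, and extend by zero to an infinite sequence $\psi_{(n)} \in \prod_{i \geq 0}[0,\infty)^{\Br_i}$. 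By (1), applied on the first $n$ levels where the compatibility holds, each $\psi_{(n)}$ is confined to one and the same compact product of intervals, so Tychonoff's theorem supplies a subsequential limit $\psi$. Since each compatibility relation $\psi^{i-1} = A^{(i)}\psi^i$ involves only finitely many coordinates and holds for $\psi_{(n)}$ once $n \geq i$, it passes to the limit; together with $\psi^0 = t$ this produces the required element of $\varprojlim_j A^{(j)}$.
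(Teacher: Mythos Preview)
Your proof is correct and follows the same line as the paper, which is extremely terse: part (1) is identical, and for part (2) the paper simply writes ``2) follows from 1) by Tychonoff's theorem.'' Your explicit diagonal argument for non-emptiness (back-solve from level $n$, rescale, extend by zero, extract a convergent subsequence) is precisely the standard unpacking of that phrase via the finite intersection property, so there is no genuine difference in approach---you have just supplied the details the paper omits.
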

\begin{proof} 1) follows from the observation that
$$
\psi^0 = \sum_{w \in \Br_j} \left(A^{(1)}A^{(2)} \cdots A^{(j)}\right)_{v_0,w}\psi^j_w \ 
$$
and 2) follows from 1) by Tychonoff's theorem.
\end{proof}

When $B = \left(B_{x,y}\right)_{(x,y) \in X \times Y}$ is a matrix over $X \times Y$, where $X$ and $Y$ are finite sets and all the entries in $B$ are positive numbers, define
\begin{equation}\label{phi}
\phi\left(B \right) = \min \left\{ \frac{B_{x,y} B_{x',y'}}{B_{x',y}B_{x,y'}} : \ x,x' \in X, \ y, y' \in Y \right\} \ .
\end{equation}

This quantity is introduced here because of the following well-known sufficient condition for the triviality of $\varprojlim_j A^{(j)}$. 

\begin{lemma}\label{30-10-18}
Let $\{A^{(j)}\}$ be a projective matrix system over $\Br$ such that $A^{(j)}_{v,w} > 0$ for $(v,w) \in \Br_{j-1} \times \Br_j$ and all $j$. 
Assume that $\sum_{j=1}^{\infty} \sqrt{\phi\left(A^{(j)}\right)} = \infty$. The map 
$$
 \varprojlim_j A^{(j)} \ni \psi \ \mapsto \ \psi^0
 $$
 is an affine homeomorphism $ \varprojlim_j A^{(j)} \simeq \mathbb R^+$.
 \end{lemma}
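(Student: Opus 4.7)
The map $\pi\colon \varprojlim_j A^{(j)} \to \mathbb R^+$ given by $\pi(\psi) = \psi^0$ is affine and continuous by construction, and it is surjective and proper because $\pi^{-1}(K)$ is a non-empty compact subset of $\varprojlim_j A^{(j)}$ for every non-empty compact $K \subseteq [0,\infty)$ by Lemma~\ref{21-09-18bb}(2). The content of the lemma is therefore that $\pi$ is injective, and it suffices to show this for the fibre over any fixed $c > 0$ (the case $c=0$ follows from Lemma~\ref{21-09-18bb}(1)). Since every entry of $A^{(j)}$ is strictly positive, induction in $j$ shows that every $\psi \in \pi^{-1}(c)$ satisfies $\psi^j_w > 0$ for all $w \in \Br_j$.

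Given two elements $\psi, \tilde\psi$ of $\pi^{-1}(c)$, set $\xi^j = \psi^j - \tilde\psi^j$ and $\omega^{(j)}_w = (A^{(1)} A^{(2)} \cdots A^{(j)})_{v_0,w}$. Iterating $\psi^{j-1} = A^{(j)}\psi^j$ gives $\omega^{(j)} \cdot \xi^j = \xi^0 = 0$, so the two non-negative scalars $\omega^{(j)}\cdot (\xi^j)^+$ and $\omega^{(j)} \cdot (\xi^j)^-$ agree; call their common value $m^j$. Then $m^0 = 0$ and $m^j \leq \omega^{(j)} \cdot \psi^j = c$. Applying the identity $(a-b)^+ = a - \min(a,b)$ to $\xi^{j-1} = A^{(j)}(\xi^j)^+ - A^{(j)}(\xi^j)^-$ coordinatewise and summing against $\omega^{(j-1)}$ produces
\[ m^{j-1} = m^j - \sum_{v \in \Br_{j-1}} \omega^{(j-1)}_v\, \min\bigl((A^{(j)}(\xi^j)^+)_v, (A^{(j)}(\xi^j)^-)_v\bigr), \]
showing that $m^j$ is non-decreasing in $j$.

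The plan is to bound the ``loss'' in this identity from below in terms of $\phi(A^{(j)})$. Introduce the column-stochastic matrix $q^{(j)}_{v,w} = \omega^{(j-1)}_v A^{(j)}_{v,w}/\omega^{(j)}_w$; a direct cancellation shows $\phi(q^{(j)}) = \phi(A^{(j)})$. The mutually singular probability measures $P^{j,\pm}_w = \omega^{(j)}_w (\xi^j)^\pm_w/m^j$ on $\Br_j$ push forward under $q^{(j)}$ to probability measures $\mu^j = q^{(j)} P^{j,+}$ and $\nu^j = q^{(j)} P^{j,-}$ on $\Br_{j-1}$, and a short computation identifies the loss term with $m^j(1 - \|\mu^j - \nu^j\|_{TV})$. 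Dobrushin's ergodic coefficient inequality then gives
\[ m^{j-1} = m^j \|\mu^j - \nu^j\|_{TV} \leq \delta(q^{(j)})\,m^j. \]
I expect the main technical step to be the estimate $1 - \delta(q^{(j)}) \geq \sqrt{\phi(A^{(j)})}$. This amounts to showing $\sum_v \min(q^{(j)}_{v,w}, q^{(j)}_{v,w'}) \geq \sqrt{\phi(A^{(j)})}$ for any two columns $w, w'$: writing $r_v = q^{(j)}_{v,w}/q^{(j)}_{v,w'}$, the cross-ratio defining $\phi$ forces $\max_v r_v/\min_v r_v \leq 1/\phi$, the weighted mean of $r_v$ under $q^{(j)}_{\cdot,w'}$ equals $1$, and extremizing $\mathbb E[\min(r,1)]$ over the resulting two-point distributions attains its minimum $2\sqrt\phi/(1+\sqrt\phi) \geq \sqrt\phi$ at $r_v \in \{\sqrt\phi, 1/\sqrt\phi\}$.

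Armed with this estimate, assume for contradiction that $m^\infty := \lim_j m^j > 0$, and choose $J_0$ so that $m^j \geq m^\infty/2$ for all $j \geq J_0$. Then
\[ \sum_{j \geq J_0} \sqrt{\phi(A^{(j)})} \leq \sum_{j \geq J_0} \bigl(1 - \delta(q^{(j)})\bigr) \leq \frac{2}{m^\infty}\sum_{j \geq J_0} (m^j - m^{j-1}) \leq 2, \]
contradicting $\sum_j \sqrt{\phi(A^{(j)})} = \infty$. Hence $m^j \equiv 0$; positivity of $\omega^{(j)}$ then forces $(\xi^j)^\pm = 0$, so $\psi^j = \tilde\psi^j$ for all $j$, completing injectivity. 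Together with the opening paragraph this establishes the asserted affine homeomorphism.
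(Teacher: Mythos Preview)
Your argument is correct. The paper's own proof consists of a single sentence citing Proposition~26.10 in Woess's book \cite{Wo}, so you have taken a genuinely different route by supplying a self-contained proof of the underlying weak-ergodicity statement. What you do is essentially reprove the relevant part of that proposition: you pass to the column-stochastic reweighting $q^{(j)}$, recognise the recursion $m^{j-1} = m^j\|\mu^j - \nu^j\|_{TV}$, invoke Dobrushin's contraction inequality, and then establish the key estimate $1 - \delta(q^{(j)}) \geq \sqrt{\phi(A^{(j)})}$ by an explicit optimisation. The only place that is somewhat compressed is the reduction to two-point distributions in that optimisation; it is worth saying explicitly that, with the support of $r$ contained in an interval $[m,M]$ with $M/m \leq 1/\phi$, concavity of $r \mapsto \min(r,1)$ forces the minimum of $\mathbb{E}[\min(r,1)]$ under the mean constraint $\mathbb{E}[r]=1$ to occur at the two-point law on $\{m,M\}$, after which your calculus computation locating the extremum at $\{\sqrt\phi,1/\sqrt\phi\}$ finishes the bound. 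The payoff of your approach is self-containment and an explicit constant; the payoff of the paper's approach is brevity, since Birkhoff--Dobrushin contraction for positive matrices is classical and Woess packages exactly the inverse-limit statement needed here.
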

\begin{proof} This follows from Proposition 26.10 on page 280 in \cite{Wo}.
\end{proof}

For later reference we record the following simple observation.

\begin{lemma}\label{01-11-18b} Let $B = \left(B_{x,y}\right)_{(x,y) \in X \times Y}$ be a strictly positive  matrix over $X \times Y$ and $A = \left(A_{x,y}\right)_{(x,y) \in X \times Y}$ a non-negative matrix over $X \times Y$. Assume that 
$$
\frac{A_{x,y}}{B_{x,y}} \ \leq \ \epsilon \ < \ 1
$$
for all $(x,y) \in X \times Y$. Then $\phi(B + A) \geq (1+\epsilon)^{-2}\phi(B)$ and, if $B -A$ is strictly positive, $\phi(B-A) \geq (1-\epsilon)^2\phi(B)$.
\end{lemma}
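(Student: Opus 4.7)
The plan is essentially a direct entrywise bound followed by substitution into the cross-ratio formula \eqref{phi} defining $\phi$. No clever rearrangement is required; the only issue is bookkeeping which factor of $(1\pm\epsilon)$ lands in the numerator and which in the denominator.

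First I would observe that the hypothesis $A_{x,y}/B_{x,y} \leq \epsilon$ for all $(x,y)$ translates, since $A$ is non-negative and $B$ is strictly positive, into the two-sided pinching
\eq{B_{x,y} \;\leq\; (B+A)_{x,y} \;\leq\; (1+\epsilon) B_{x,y}}
and, whenever $B-A$ is strictly positive,
\eq{(1-\epsilon) B_{x,y} \;\leq\; (B-A)_{x,y} \;\leq\; B_{x,y}.}

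Next I would fix any four indices $x,x' \in X$ and $y,y' \in Y$ and consider the cross ratio that appears in \eqref{phi}. For $C = B+A$ the first inequality gives numerator $C_{x,y}C_{x',y'} \geq B_{x,y}B_{x',y'}$ and denominator $C_{x',y}C_{x,y'} \leq (1+\epsilon)^2 B_{x',y}B_{x,y'}$, so
\eQ{\frac{(B+A)_{x,y}(B+A)_{x',y'}}{(B+A)_{x',y}(B+A)_{x,y'}} \;\geq\; \frac{1}{(1+\epsilon)^2}\cdot \frac{B_{x,y}B_{x',y'}}{B_{x',y}B_{x,y'}} \;\geq\; \frac{\phi(B)}{(1+\epsilon)^2}.}
Taking the minimum over the four indices yields the first asserted inequality. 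For $C = B-A$ the roles reverse: the numerator is now at least $(1-\epsilon)^2 B_{x,y}B_{x',y'}$ while the denominator is at most $B_{x',y}B_{x,y'}$, which delivers $\phi(B-A) \geq (1-\epsilon)^2 \phi(B)$ by the same minimization.

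Since each step is a one-line estimate, there is no genuine obstacle here; the only thing to watch is that the cross ratio has two factors in the numerator and two in the denominator, so the worst case accrues $(1\pm\epsilon)^2$ rather than $(1\pm\epsilon)$.
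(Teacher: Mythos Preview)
Your proof is correct. The paper states the lemma without proof, calling it a ``simple observation'' left to the reader, and your direct entrywise pinching followed by substitution into the cross-ratio definition \eqref{phi} is exactly the intended verification.
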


Given a potential $F : \Br_{Ar} \to \mathbb R$ and a real number $\beta \in \mathbb R$ we define a projective matrix system $\{A^{(j)}(\beta)\}$ over $\Br$ such that
\begin{equation}\label{27-09-18a}
A^{(j)}(\beta)_{v,w} \ = \sum_{a \in s^{-1}(v) \cap r^{-1}(w)} e^{-\beta F(a)}
\end{equation}
when $(v,w) \in \Br_{j-1} \times \Br_j$. It follows from 2) in Lemma \ref{21-09-18bb} that the set
\begin{equation}\label{S-set}
\left\{ (\psi^j)_{j=0}^{\infty} \in  \varprojlim_j A^{(j)}(\beta) \ : \ \psi^0 = 1 \right\} \ 
\end{equation}
is a non-empty compact convex set for every $\beta \in \mathbb R$. 


\begin{prop}\label{21-09-18(90)b} Let $F : \Br_{Ar} \to \mathbb R$ be a potential. For all $\beta \in \mathbb R$ there is an affine homeomorphism from $\mathbb R^+ S^{F}_{\beta}$ onto $\varprojlim_j A^{(j)}(\beta)$ which maps $S^F_{\beta}$ onto the set \eqref{S-set}. 
\end{prop}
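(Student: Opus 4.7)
The plan is to read off $(\psi^j)$ from $\omega$ by evaluating on the diagonal matrix units $E^n_{\mu,\mu}$, using the KMS condition to force vanishing off the diagonal and to show that $e^{\beta F(\mu)}\omega(E^n_{\mu,\mu})$ depends only on $r(\mu)$. The projective compatibility comes for free from the telescoping identity $E^n_{\mu,\mu}=\sum_{a\in s^{-1}(r(\mu))}E^{n+1}_{\mu a,\mu a}$. Since each $E^n_{\mu,\mu'}$ is entire-analytic with $\alpha^F_{i\beta}(E^n_{\mu,\mu'})=e^{-\beta(F(\mu)-F(\mu'))}E^n_{\mu,\mu'}$, inserting $a=E^n_{\mu,\mu'}$ and $b=E^n_{\nu,\nu'}$ into \eqref{KMS} and using $E^n_{\mu,\mu'}E^n_{\nu,\nu'}=\delta_{\mu'\nu}E^n_{\mu,\nu'}$ gives
\[
\delta_{\mu'\nu}\,\omega(E^n_{\mu,\nu'})=e^{-\beta(F(\mu)-F(\mu'))}\,\delta_{\nu'\mu}\,\omega(E^n_{\nu,\mu'}).
\]
Specializing to $\nu=\mu'$, $\nu'\ne\mu$ gives $\omega(E^n_{\mu,\nu'})=0$ off the diagonal, while $\nu=\mu'$, $\nu'=\mu$ gives $e^{\beta F(\mu)}\omega(E^n_{\mu,\mu})=e^{\beta F(\mu')}\omega(E^n_{\mu',\mu'})$ whenever $r(\mu)=r(\mu')$. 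Setting $\psi^n_w := e^{\beta F(\mu)}\omega(E^n_{\mu,\mu})$ for any $\mu\in\mathcal P_n$ with $r(\mu)=w$ and applying $\omega$ to the telescoping identity yields $\psi^n_w=\sum_{a\in s^{-1}(w)}e^{-\beta F(a)}\psi^{n+1}_{r(a)}=(A^{(n+1)}(\beta)\psi^{n+1})_w$, so $(\psi^j)\in\varprojlim_j A^{(j)}(\beta)$, with $\psi^0=\omega(P_{v_0})=\omega(1)$.

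For the inverse, given $(\psi^j)\in\varprojlim_j A^{(j)}(\beta)$, define $\omega_n$ on $\mathbb F_n$ by $\omega_n(E^n_{\mu,\mu'})=\delta_{\mu\mu'}e^{-\beta F(\mu)}\psi^n_{r(\mu)}$. On the block indexed by $w\in\Br_n$, $\omega_n$ equals $\psi^n_w\cdot\tr(\,\cdot\,e^{-\beta H_n})$, a positive multiple of the standard Gibbs functional for the inner action $\alpha^F|_{\mathbb F_n}=\Ad e^{itH_n}$; hence $\omega_n$ is positive and satisfies the KMS identity on $\mathbb F_n$. Iterating the projective identity shows $\omega_n(P_{v_0})=\sum_{\mu\in\mathcal P_n}e^{-\beta F(\mu)}\psi^n_{r(\mu)}=(A^{(1)}(\beta)\cdots A^{(n)}(\beta)\psi^n)_{v_0}=\psi^0$, independently of $n$. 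The projective identity also says precisely that $\omega_{n+1}|_{\mathbb F_n}=\omega_n$, so the $\omega_n$ combine into a bounded positive functional $\omega$ of norm $\psi^0$ on the dense $*$-subalgebra $\bigcup_n\mathbb F_n\subseteq AF(\Br)=P_{v_0}C^*(\Br)P_{v_0}$, which extends by continuity to all of $AF(\Br)$. Since the matrix units are entire-analytic and span this dense subalgebra, the block-wise KMS identity propagates to the whole algebra, so $\omega\in\mathbb R^+S^F_\beta$.

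The two assignments are manifestly mutually inverse and affine. The map $\omega\mapsto(\psi^j)$ is continuous from the weak* topology to the product topology, since every $\psi^j_w$ is a weak*-continuous functional of $\omega$. For the converse, if $\psi_\alpha\to\psi$ in $\varprojlim_j A^{(j)}(\beta)$ then $\psi^0_\alpha\to\psi^0$, so the corresponding $\omega_\alpha$ are uniformly norm-bounded; since they also converge pointwise on every matrix unit, hence on the dense $*$-subalgebra $\bigcup_n\mathbb F_n$, they converge weak*. Restricting to $\psi^0=1$ corresponds to $\omega(1)=1$, giving the identification of $S^F_\beta$ with the compact convex set \eqref{S-set}. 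The main subtlety is checking the KMS identity for the extension $\omega$: this is handled cleanly because $\alpha^F$ restricts to the \emph{inner} action $\Ad e^{itH_n}$ on each $\mathbb F_n$, reducing the verification to the standard fact that a positive multiple of $\tr(\,\cdot\,e^{-\beta H})$ is a $\beta$-KMS functional for $\Ad e^{itH}$ in finite dimensions.
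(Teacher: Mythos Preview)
Your proof is correct and follows essentially the same route as the paper: define $\psi^n_w=e^{\beta F(\mu)}\omega(E^n_{\mu,\mu})$ using the KMS relation on matrix units, check the projective identity via the telescoping $E^n_{\mu,\mu}=\sum_a E^{n+1}_{\mu a,\mu a}$, and build the inverse by setting $\omega_n(E^n_{\mu,\mu'})=\delta_{\mu\mu'}e^{-\beta F(\mu)}\psi^n_{r(\mu)}$ and gluing along the tower. The only cosmetic differences are that you verify the KMS identity for $\omega_n$ by recognizing it as $\psi^n_w\cdot\tr(\,\cdot\,e^{-\beta H_n})$ on each block rather than by the paper's direct matrix-unit computation, you make the off-diagonal vanishing $\omega(E^n_{\mu,\nu'})=0$ explicit (which is needed for $\Phi\circ\Psi=\id$ but is left as ``straightforward'' in the paper), and you spell out the continuity of the inverse via uniform boundedness plus pointwise convergence on the dense union $\bigcup_n\mathbb F_n$, where the paper simply asserts continuity.
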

\begin{proof} Let $\omega \in \mathbb R^+ S^{F}_{\beta}$ and consider a vertex $v \in \Br_j$, $j \geq 1$. Let $(\mu,\mu') \in \mathcal P_j^{(2)}$ such that $r(\mu) = r(\mu') = v$. Then
\begin{equation*}
\begin{split}
&e^{\beta F(\mu')}\omega(  E^j_{\mu',\mu'}) = e^{\beta F(\mu')}\omega(  E^j_{\mu',\mu}E^j_{\mu,\mu'}) =   e^{\beta F(\mu')}\omega( E^j_{\mu,\mu'}\alpha^F_{i\beta}( E^j_{\mu',\mu})) \\
&=  e^{\beta F(\mu')}e^{-\beta(F(\mu')-F(\mu))} \omega( E^j_{\mu,\mu'} E^j_{\mu',\mu}) = e^{\beta F(\mu)}\omega(  E^j_{\mu,\mu}) \ ,
\end{split}
\end{equation*}
and we can therefore define $\psi(\omega)^j \in \mathbb R^{\Br_j}$ such that
$$
{\psi(\omega)}^j_v = e^{\beta F(\mu)} \omega\left( E^j_{\mu,\mu}\right)
$$
when $\mu \in \mathcal P_j$ and $r(\mu) = v$. Set $\psi(\omega)^0 = \omega(1)$ and note that
$$
\psi(\omega)^0 = \omega\left( \sum_{a \in \mathcal P_1} E^1_{a,a}\right) =  \sum_{a \in \mathcal P_1} e^{-\beta F(a)} {\psi(\omega)}^1_{r(a)} = \sum_{v \in \Br_1} A^{(1)}(\beta)_{v_0,v} {\psi(\omega)}^1_v \ .
$$
To calculate $\sum_{w \in \Br_j} A^{(j)}(\beta)_{v,w}\psi(\omega)^j_w$ when $j \geq 2$ choose $\nu \in \mathcal P_{j-1}$ such that $r(\nu) =v$. Then
\begin{equation*}
\begin{split}
&\psi(\omega)^{j-1}_v = e^{\beta F(\nu)} \omega(E^{j-1}_{\nu,\nu}) = e^{\beta F(\nu)} \sum_{a \in s^{-1}(v)} \omega(E^j_{\nu a, \nu a}) \\
&=  \sum_{w \in \Br_j} \sum_{a \in s^{-1}(v) \cap r^{-1}(w)}  e^{-\beta F(a)}e^{\beta F(\nu a)} \omega(E^j_{\nu a, \nu a})\\
& =  \sum_{w \in \Br_j} \sum_{a \in s^{-1}(v) \cap r^{-1}(w)} e^{-\beta F(a)} \psi(\omega)^j_w = \left(A^j(\beta)\psi(\omega)^j\right)_v \ .
\end{split}
\end{equation*}
This shows that we have a map $\Psi : \mathbb R^+ S^{F}_{\beta} \to \varprojlim_j A^{(j)}(\beta)$ defined such that $\Psi(\omega) =  \left(\psi(\omega)^j\right)_{j=0}^{\infty}$. To construct the inverse let $\psi = \left(\psi^j\right)_{j=0}^{\infty} \in \varprojlim_j A^{(j)}(\beta)$. For each $n = 1,2,3, \cdots$ there is a positive linear map $\omega^n_{\psi} : \mathbb F_n \to \mathbb C$ defined such that
$$
\omega^n_{\psi}\left( E^n_{\mu,\mu'}\right)  \ = \ \delta_{\mu,\mu'} e^{-\beta F(\mu)}\psi^n_{r(\mu)} \  . 
$$
Since
\begin{equation*}
\begin{split}
&\omega^{n+1}_{\psi}\left( \sum_{a \in s^{-1}(r(\mu))} E^{n+1}_{\mu a , \mu' a} \right) = \delta_{\mu,\mu'} \sum_{a \in s^{-1}(r(\mu))} e^{- \beta F(a)} e^{-\beta F(\mu)} \psi^{n+1}_{r(a)} \\
&  = \delta_{\mu,\mu'} \sum_{w \in \Br_{n+1}} e^{-\beta F(\mu)} A^{(n+1)}(\beta)_{r(\mu),w}   \psi^{n+1}_{w}  \\
&  = \delta_{\mu,\mu'} e^{-\beta F(\mu)} \psi^{n}_{r(\mu)}  \ \ = \ \ \omega^n_{\psi}(E^n_{\mu,\mu'}) \ ,
\end{split}
\end{equation*}
we conclude that $\omega_{\psi}^{n+1}|_{\mathbb F_n} = \omega_{\psi}^n$. It follows that there is a positive linear functional $\omega_{\psi}$ on $AF(\Br)$ such that $\omega_{\psi}|_{\mathbb F_n} = \omega^n_{\psi}$ for all $n$. In particular, $\left\|\omega_{\psi}\right\| = \omega_{\psi}(1) = \psi^0$. To check that $\omega_{\psi}$ is a $\beta$-KMS functional it suffices to observe that
\begin{equation*}
\begin{split}
&\omega_{\psi}^n(E^n_{\nu,\nu'}\alpha^F_{i\beta}(E^n_{\mu,\mu'})) =  
e^{-\beta(F(\mu)-F(\mu'))}\omega_{\psi}^n(E^n_{\nu,\nu'}E^n_{\mu,\mu'})) \\
& = \delta_{\mu,\nu'} \delta_{\mu',\nu} e^{-\beta(F(\mu)-F(\mu'))} e^{-\beta F(\nu)} \psi^n_{r(\nu)}  \\
& = \delta_{\mu,\nu'} \delta_{\mu',\nu}e^{-\beta F(\mu)}  \psi^n_{r(\nu)}   \ = \ \omega_{\psi}^n(E^n_{\mu,\mu'}E^n_{\nu,\nu'}) \\
\end{split}
\end{equation*}
for all $(\mu,\mu'), (\nu,\nu') \in \mathcal P_n^{(2)}$. 
It follows that we can define a map $\Phi : \varprojlim_j A^{(j)}(\beta) \to \mathbb R^+ S^{F}_{\beta}$ such that $\Phi(\psi) = \omega_{\psi}$. It is straightforward to check that $\Phi$ is the inverse to $\Psi$ and they are clearly both affine and continuous.
\end{proof}

\begin{remark}\label{28-10-18} Let $\Phi : \varprojlim_j A^{(j)}(\beta) \to \mathbb R^+ S^{F}_{\beta}$ be the affine homeomorphism of Proposition \ref{21-09-18(90)b}. If $\{\psi(n)\}$ is a sequence of elements in $\varprojlim_j A^{(j)}(\beta) $ which increases to $\psi \in \varprojlim_j A^{(j)}(\beta)$ in the sense that
$$
\psi(n)^j_v \leq \psi(n+1)^j_v  \leq \sup_k \psi(k)^j_v = \psi^j_v
$$
for all $n,j$ and $v \in \Br_j$, then $\lim_{n \to \infty} \Phi(\psi(n)) = \Phi(\psi)$ with respect to the norm in $AF(\Br)^*$ because
$$
\lim_{n \to \infty} \left\|\Phi(\psi) - \Phi(\psi(n))\right\| = \lim_{n \to \infty} \left(\psi_{v_0} - \psi(n)_{v_0}\right) = 0 \ .
$$ 
\end{remark}


\section{Constructions with Bratteli diagrams}

\subsection{Simplex cones}

We use in the following the terminology from convexity theory as developed in the book by Goodearl, \cite{G}. In particular, a convex subset $K$ of a convex cone $C$ is a base for $C$ when every non-zero element $c$ of $C$ can be written uniquely as $c = tk$ where $t \in \mathbb R^+ \backslash \{0\}$ and $k \in K$. By a \emph{simplex cone} we mean a lattice cone $C$ in a second countable locally convex real vector space which contains a compact base. Thus a lattice cone $C$ in a second countable locally convex real vector space is a simplex cone if and only if there is a continuous affine map $l: C \to \mathbb R$ such that $l(c) > 0$ for all $c \in C \backslash \{0\}$ and $\left\{c \in C: \ l(c) = 1\right\}$ is compact. Notice that a compact base in a simplex cone is a metrizable Choquet simplex by definition.

Let $A$ be a separable unital $C^*$-algebra and $\mathcal T(A)$ the real vector space of trace functionals; i.e. $\mathcal T(A)$ consists of the continuous linear functionals $\omega : A \to \mathbb C$ which satisfy that $\omega(a^*) = \overline{\omega(a)}$ and $\omega(ab) = \omega(ba)$ for all $a,b \in A$. $\mathcal T(a)$ is a second countable locally convex real vector space in the weak* topology and the cone 
$$
\mathcal T^+(A) = \left\{ \omega \in \mathcal T(A) : \ \omega(a) \geq 0 \ \forall a \geq 0 \right\}
$$
is a simplex cone by a result of Thoma, \cite{T}. It is important here that the converse is also true. 

\begin{thm}\label{24-10-18} (Blackadar) Let $C$ be a simplex cone. There is a simple unital AF algebra $A$ and an affine homeomorphism from $C$ onto $\mathcal T^+(A)$.
\end{thm}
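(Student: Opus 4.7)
The plan is to reduce the statement to Blackadar's classical realization theorem for metrizable Choquet simplices. By definition of a simplex cone, there is a continuous affine functional $l : C \to \mathbb{R}$ with $l > 0$ on $C \setminus \{0\}$ such that $K := \{c \in C : l(c) = 1\}$ is compact. Since $C$ is a lattice cone in a second countable locally convex space, $K$ is a compact, metrizable Choquet simplex. Symmetrically, for any unital separable $C^*$-algebra $A$, the functional $\tau \mapsto \tau(1)$ is continuous and affine on $\mathcal{T}^+(A)$ and strictly positive off $0$, with compact level set $T(A) := \{\tau \in \mathcal{T}^+(A) : \tau(1) = 1\}$; hence $T(A)$ is a compact base for $\mathcal{T}^+(A)$. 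The theorem therefore reduces to the assertion that every metrizable Choquet simplex is affinely homeomorphic to $T(A)$ for some simple unital AF algebra $A$.

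The main input is precisely Blackadar's realization theorem stating this; its proof proceeds by presenting $K$ as an inverse limit of finite-dimensional simplices, encoding the inverse system as a Bratteli diagram, and arranging enough connectivity between consecutive levels to guarantee simplicity of the resulting AF algebra. Taking this as a cited black box, we obtain a simple unital AF algebra $A$ together with an affine homeomorphism $\phi : K \to T(A)$.

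It then remains to extend $\phi$ by positive homogeneity to an affine homeomorphism $\Phi : C \to \mathcal{T}^+(A)$. I would set $\Phi(0) := 0$ and $\Phi(c) := l(c)\, \phi(c/l(c))$ for $c \neq 0$, with candidate inverse $\Phi^{-1}(\tau) := \tau(1)\, \phi^{-1}(\tau/\tau(1))$ on $\mathcal{T}^+(A) \setminus \{0\}$ and $\Phi^{-1}(0) := 0$. A short computation using that $l$ and $\tau \mapsto \tau(1)$ are both linear shows $\Phi$ is affine. Continuity at nonzero points is inherited from continuity of $l$, $\phi$, and the scalar action; continuity at $0$ is automatic since $T(A)$ is norm bounded (tracial states have norm $1$ on a unital algebra), so $\|\Phi(c)\| \leq l(c) \to 0$ as $c \to 0$. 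The same argument gives continuity of $\Phi^{-1}$, so $\Phi$ is the desired affine homeomorphism.

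The only non-routine step is the invocation of Blackadar's theorem; the reduction to the compact base and the homogeneous extension are straightforward.
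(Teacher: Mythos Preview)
Your proposal is correct and follows exactly the paper's approach: pick a compact base $K\subseteq C$, invoke Blackadar's theorem (Theorem 3.10 in \cite{Bl}) to realize $K$ as the tracial state space of a simple unital AF algebra, and then extend by positive homogeneity to an affine homeomorphism $C\to\mathcal T^+(A)$. The paper compresses the extension step into a single sentence, whereas you spell out the formula and the continuity check; the only cosmetic point is that your argument tacitly uses that $l$ may be taken linear (i.e.\ $l(0)=0$), which is standard for a base-defining functional on a cone.
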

\begin{proof} Let $K \subseteq C$ be a compact base in $C$. By Theorem 3.10 in \cite{Bl} there is a simple unital AF algebra $A$ and an affine homeomorphism from $K$ onto the tracial state space of $A$. This affine homeomorphism extends uniquely to an affine homeomorphism from $C$ onto $\mathcal T^+(A)$. 
\end{proof}

As observed in the last proof, Choquet simplexes that are affinely homeomorphic will generate simplex cones that are also affinely homeomorphic. The converse is not true in general; a simplex cone can have many bases that are distinct in the sense that they are not affinely homeomorphic:

\begin{example}\label{24-10-18a}
When $(k,m) \in \mathbb N^2$ let $\mathcal A(k,m)$ be the AF algebra
$$    
\left\{ (x_i)_{i=1}^{\infty} \in \prod_{i=1}^{\infty} M_{k+m}(\mathbb C) : \ \lim_{i \to \infty} x_i  \in M_k(\mathbb C) \oplus M_m(\mathbb C) \subseteq M_{k+m}(\mathbb C) \right\} \ .
$$ 
For any two pairs $(k,m), (k'm') \in \mathbb N^2$ the simplex cones of positive trace functionals on $\mathcal A(k,m)$ and $\mathcal A(k',m')$ are affinely homeomorphic, but the simplexes of trace states are not when 
$$
\left\{ \frac{k}{k+m}, \frac{m}{k+m}\right\} \neq \left\{\frac{k'}{k'+m'}, \frac{m'}{k'+m'}\right\} \ .
$$
\end{example}

 As the next lemma shows this phenomenon can only occur because none of the trace spaces are Bauer simplexes. Recall that a Choquet simplex is Bauer when its extreme boundary is a closed subset of the simplex.

 \begin{lemma}\label{24-10-18b} Let $C$ and $C'$ be simplex cones, $K$ a compact base in $C$ and $K'$ a compact base in $C'$. Assume that $C$ is affinely homeomorphic to $C'$.
\begin{itemize}
\item There is an affine isomorphism from $K$ onto $K'$ which restricts to a homeomorphism from the extreme boundary $\partial_e K$ of $K$ onto the extreme boundary $\partial_e K'$ of $K'$.
\item Assume that $K$ is a Bauer simplex. Then $K$ and $K'$ are affinely homeomorphic.
\end{itemize}
\end{lemma}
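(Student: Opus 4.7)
The plan is to use the affine homeomorphism of the cones to produce a homeomorphism $\phi$ between the extreme boundaries of $K$ and $K'$, then to lift $\phi$ via Choquet's uniqueness theorem for representing measures to an affine bijection of the simplices themselves, which becomes a homeomorphism exactly when $K$ is Bauer. To start, let $\Phi : C \to C'$ be the given affine homeomorphism of convex cones; it sends $0$ to $0$ and permutes extreme rays. Since a point $k$ of the base $K$ is extreme in $K$ if and only if $\mathbb R^+ k$ is an extreme ray of $C$, the formula
\[
\phi(k) \;=\; \frac{\Phi(k)}{l_{K'}(\Phi(k))},
\]
where $l_{K'}$ is the linear functional whose $1$-level set is $K'$, defines a continuous map $K \to K'$ that carries $\partial_e K$ into $\partial_e K'$; the analogous construction for $\Phi^{-1}$ produces the inverse on $\partial_e K'$, so $\phi$ restricts to a homeomorphism $\partial_e K \to \partial_e K'$.

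I would then transport $\phi$ through Choquet theory. Every $k \in K$ has a unique maximal representing probability measure $\mu_k$ on $K$, carried by the Borel set $\partial_e K$; uniqueness forces $k \mapsto \mu_k$ to be affine, because a convex combination of maximal measures is again maximal and hence represents the corresponding convex combination of the barycenters. Let $T(k)$ be the barycenter of the push-forward $\phi_*\mu_k$, regarded as a Borel probability measure on $K'$. Then $T$ is affine, the analogous construction from $\phi^{-1}$ yields a two-sided inverse, and since $\mu_k = \delta_k$ when $k \in \partial_e K$ the map $T$ restricts to $\phi$ on the extreme boundaries; this proves the first bullet.

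For the second bullet, if $K$ is Bauer then $\partial_e K$ is compact, so $\partial_e K' = \phi(\partial_e K)$ is compact and $K'$ is Bauer as well. For any Bauer simplex $L$ the barycenter map from the weak*-compact set of regular Borel probability measures on $\partial_e L$ onto $L$ is an affine continuous bijection between metrizable compact convex sets, hence an affine homeomorphism. Composing the barycenter isomorphism for $K$, the push-forward on measures induced by $\phi$, and the inverse barycenter isomorphism for $K'$ therefore yields an affine homeomorphism $K \to K'$, which coincides with the map $T$ constructed in the first part. The one delicate ingredient is the affineness of $k \mapsto \mu_k$, which rests on Choquet uniqueness of maximal measures in a simplex; the Bauer hypothesis is needed only in order for the set of probability measures carried by $\partial_e K$ to be weak*-compact, since for a non-Bauer simplex the measures on the non-closed boundary $\partial_e K$ need not form a closed subset of the probability measures on $K$.
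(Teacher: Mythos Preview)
Your proof is correct and follows essentially the same route as the paper: normalize $\Phi$ by the base functional to get a homeomorphism of extreme boundaries, push forward the unique maximal (boundary) measures to obtain the affine bijection, and in the Bauer case invoke that the barycenter map from probability measures on the closed extreme boundary is an affine homeomorphism. Your write-up is slightly more explicit than the paper's in justifying why $k \mapsto \mu_k$ is affine and why the Bauer hypothesis is what makes the barycenter correspondence a homeomorphism, but the argument is the same.
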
 
\begin{proof} Let $\Phi : C \to C'$ be an affine homeomorphism and $l' : C' \to \mathbb R$ a continuous affine map such that $K' = \left\{ c \in C' : \ l'(c) = 1 \right\}$. When $k \in \partial_e K$ set $f(k) = l'\left(\Phi(k)\right)^{-1}\Phi(k)$. Then $f$ is a homeomorphism from $\partial_e K$ onto $\partial_e K'$. To extend $f$ to an affine isomorphism we use that $K$ and $K'$ are Choquet simplexes: Every element $k \in K$ is the barycenter of a unique Borel probability measure $\nu_k$ on $\partial_e K$; in symbols
$$
k = \int_{\partial_e K} \lambda \ \mathrm{d}\nu_k \ ,
$$
cf. Theorem 4.1.15 in \cite{BR}. Define $\psi(k) \in K'$ as the barycenter of the Borel probability measure $\nu_k \circ f^{-1}$ on $\partial_e K'$, i.e.
$$
\psi(k) = \int_{\partial_e K'} \lambda \ \mathrm{d} \nu_k \circ f^{-1} \ .
$$
Then $\psi : K \to K'$ is an affine isomorphism. If we assume, as we do in the second item, that $K$ is a Bauer simplex it follows that $\partial_e K'$ is compact and hence that $K'$ is also a Bauer simplex. For a Bauer simplex the barycentric decomposition of elements is an affine homeomorphism onto the Borel probability measures on the extreme boundary, see e.g. Corollary 11.20 in \cite{G}, and hence $\psi$ is a homeomorphism in this case. 
\end{proof}

The key manipulations below will be done to simplex cones, and in order to formulate the main results in terms of Choquet simplexes we will say that two compact convex sets $K$ and $K'$ are \emph{strongly affinely isomorphic} when there is an affine isomorphism $K \to K'$ whose restriction $\partial_e K \to \partial_e K'$ to the extreme boundaries is a homeomorphism. This notion is strictly stronger than affine isomorphism and strictly weaker than affine homeomorphism, but agrees with the latter for Bauer simplexes by the proof of Lemma \ref{24-10-18b}.

\subsection{Putting Bratteli diagrams together}\label{26-10-18}

The aim in this section is to prove the following

\begin{prop}\label{13-10-18c} Let $\mathbb I$ be a finite or countably infinite collection of intervals in $\mathbb R$ such that $I_0 = \mathbb R$ for at least one element $I_0 \in \mathbb I$. For each $I \in \mathbb I$ choose a simplex cone $L_I$ and for each $\beta \in \mathbb R$ set $\mathbb I_{\beta} = \left\{I \in \mathbb I: \ \beta \in I \right\}$. There is a Bratteli diagram $\Br$ and a potential $F : \Br_{Ar} \to \mathbb R$ with the following properties:
\begin{itemize}
\item For each $I \in \mathbb I$ and each $\beta \in I$ there is a closed face $L'_I$ in $\mathbb R^+ S^{F}_{\beta}$ and an affine homeomorphism of $L_I$ onto $L'_I$.
\item For each $\beta \in\mathbb R$ and each $\omega \in \mathbb R^+ S^{F}_{\beta}$ there is a unique norm-convergent decomposition
$$
\omega = \sum_{I \in \mathbb I_{\beta}} \omega_I
$$
where $\omega_I \in L'_I$.
\end{itemize}
\end{prop}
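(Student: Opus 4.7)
My plan is to build $\Br$ and $F$ by placing a single top vertex $v_0$ at level $0$; attaching below it one disjoint sub-tree per interval $I \in \mathbb{I}$; and installing in each sub-tree both a Bratteli diagram realising $L_I$ and an auxiliary switching structure whose potentials activate the $I$-sub-tree precisely when $\beta \in I$.

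First, Theorem~\ref{24-10-18} (Blackadar) realises each $L_I$ as $\mathcal{T}^+(A_I)$ for a simple unital AF algebra $A_I$ with a chosen Bratteli diagram $\Br^I$. With the zero potential on $\Br^I$, Proposition~\ref{21-09-18(90)b} identifies $\varprojlim_j A^{I,(j)}(\beta)$ with $\mathcal{T}^+(A_I) \cong L_I$ for every $\beta$; this will be the "on" behaviour of each $I$-block. I then assemble $\Br$ so that its vertex set at each level $\geq 1$ is partitioned into $I$-blocks (one per $I$, containing the levels of $\Br^I$ together with auxiliary switching vertices), with all arrows confined within a single block. This makes the matrices $A^{(j)}(\beta)$ block-diagonal across the partition, so that $\varprojlim_j A^{(j)}(\beta)$ splits as the direct sum of the $I$-block sub-limits.

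The principal technical content is the design of a \emph{switching zone} placed between $v_0$ and $\Br^I$ inside each $I$-block, consisting of auxiliary vertices and parallel arrows carrying potentials of both signs tuned to the endpoints of $I$. These potentials are chosen so that: (a) for $\beta \in I$ the switching zone passes flux non-degenerately and leaves the $I$-block's sub-limit equal to $L_I$; (b) for $\beta \notin I$ the matrices in the switching zone become approximately rank-one in logarithmic scale, driving $\phi(A^{(j)}(\beta)) \to 1$ so that $\sum \sqrt{\phi(A^{(j)}(\beta))} = \infty$, and Lemma~\ref{30-10-18} trivialises that part of the sub-limit. Combined with an infinite switching depth and the upper bound of Lemma~\ref{21-09-18bb}(1) applied across the whole $I$-block, this forces the unique sub-limit ray to correspond to $\psi^0 = 0$, collapsing the $I$-block's contribution to $\{0\}$ when $\beta \notin I$.

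Consequently $\varprojlim_j A^{(j)}(\beta) \cong \bigoplus_{I \in \mathbb{I}_\beta} L_I$ at every $\beta$. Proposition~\ref{21-09-18(90)b} translates this into closed faces $L'_I \subseteq \mathbb{R}^+ S^F_\beta$ and the unique decomposition $\omega = \sum_{I \in \mathbb{I}_\beta} \omega_I$; the affine homeomorphism $L_I \to L'_I$ is the composition of the Blackadar identification with $\Phi$ from Proposition~\ref{21-09-18(90)b}. Norm-convergence of the partial sums follows from the monotone convergence statement in Remark~\ref{28-10-18} applied to the increasing truncations of $\psi \in \varprojlim_j A^{(j)}(\beta)$ to finite sub-collections of $I$-blocks. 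The main obstacle is the coordinated design of the switching potentials so that each $I$-block is properly turned on for $\beta \in I$ and fully off (contributing the zero cone, not merely a one-dimensional leftover) for $\beta \notin I$; this is delicate for unbounded intervals such as $\mathbb{R}$ itself and, as the introduction of the paper indicates, requires the full flexibility of two-sign potentials, previously unavailable in the strongly-connected-graph framework.
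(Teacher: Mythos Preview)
Your block-diagonal architecture has two genuine gaps. First, when $\mathbb I$ is infinite, hanging one disjoint sub-tree per $I$ below $v_0$ forces infinitely many vertices at each level $\geq 1$, violating the Bratteli-diagram axioms; staggering does not help while the blocks remain disjoint, since each pending block still needs its own private path down from $v_0$. Second, and more fundamentally, a disjoint block can never be switched to $\{0\}$. Because every vertex other than $v_0$ receives an arrow, every column of each $A_I^{(j)}(\beta)$ has a strictly positive entry, so for $\psi\geq 0$ the equation $A_I^{(j)}(\beta)\psi=0$ forces $\psi=0$; hence any non-zero element of the block sub-limit has non-zero level-$1$ component and contributes positively to $\psi^0$. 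Lemma~\ref{21-09-18bb}(2) guarantees such non-zero elements exist for \emph{every} $\beta$, and Lemma~\ref{30-10-18} can collapse the block only to a ray $\mathbb R^+$, never to $\{0\}$. The ``one-dimensional leftover'' you flag is therefore not a delicacy to be handled by clever two-sign potentials; it is an unavoidable feature of the disjoint layout, and it spoils the decomposition over $\mathbb I_\beta$.

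The paper's construction is not block-diagonal. It takes $\Br^0$ (realising $L_{I_0}$) as a shared spine and attaches each $\Br^k$ by arrows directed \emph{from} spine vertices \emph{into} $\Br^k$, with $\Br^k$ first appearing at level $k$ so that all level sets stay finite. The switching mechanism is a convergence criterion rather than a rank collapse: given $\psi\in\varprojlim_j A^{(k,j)}$, its minimal extension $\overline\psi$ to all of $\Br$ requires summing, at each spine vertex, over all later entry points into $\Br^k$; the multiplicities and the potentials $\kappa(j)=\pm j$ on the connecting arrows are tuned via Lemma~\ref{13-10-18a} so that this series converges iff $\beta\in I_k$ (Lemma~\ref{25-10-18X}). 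Thus for $\beta\notin I_k$ no element of $\varprojlim_j A^{(j)}(\beta)$ can restrict non-trivially to $\Br^k$, which is exactly the vanishing you were trying to manufacture inside a self-contained block.
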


 The intervals in $\mathbb I$ can be completely arbitrary; they may be empty, closed, open, half-open or they may consist of only one number. Since we allow empty intervals we can assume that $\mathbb I$ is infinite. Let then $I_1,I_2, I_3,  \cdots $ be a numbering of the elements in $\mathbb I \backslash \{I_0\}$. For the construction of $\Br$ and $F$ we shall use the following lemma whose proof we leave to the reader.

\begin{lemma}\label{13-10-18a} For $k \geq 1$ there are sequences $\{s^k_j\}_{j=1}^{\infty}$ and $\{t^k_j\}_{j=1}^{\infty}$ of positive real numbers such that 
$$
\sum_{j=1}^{\infty} s^k_j e^{j \beta} \ + \ \sum_{j=1}^{\infty} t^k_j e^{-j \beta} \ < \ \infty
$$ 
if and only if $\beta \in I_k$.
\end{lemma}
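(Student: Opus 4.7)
The plan is to observe that the convergence region of $\sum_{j=1}^{\infty} s^k_j e^{j\beta}$ in $\beta$ is always a left-infinite interval $(-\infty,b)$ or $(-\infty,b]$ with $e^{-b}=\limsup_j (s^k_j)^{1/j}$, and symmetrically that the convergence region of $\sum_{j=1}^{\infty} t^k_j e^{-j\beta}$ is a right-infinite interval $(a,\infty)$ or $[a,\infty)$. Whether the endpoint $b$ (respectively $a$) is included is controlled by the polynomial factor beyond the pure exponential rate: a factor of $j^{-2}$ includes the endpoint, while the bare exponential excludes it. Since the combined series converges precisely where both sub-series do, we can realize any interval $I_k$ with any combination of endpoint types by choosing $s^k_j$ to pin down the right endpoint $b=\sup I_k$ and $t^k_j$ to pin down the left endpoint $a=\inf I_k$ separately.

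Concretely, given $I_k$ with endpoints $a\le b$ in $[-\infty,+\infty]$, I would put $s^k_j=e^{-j^2}$ if $b=+\infty$; $s^k_j=j^{-2}e^{-jb}$ if $b\in I_k$ is finite; and $s^k_j=e^{-jb}$ if $b\notin I_k$ is finite. Dually, $t^k_j=e^{-j^2}$ if $a=-\infty$; $t^k_j=j^{-2}e^{ja}$ if $a\in I_k$; and $t^k_j=e^{ja}$ if $a\notin I_k$. The verification then reduces to three elementary observations: the series $\sum j^{-2}e^{j(\beta-b)}$ converges iff $\beta\le b$ (convergent geometric-type for $\beta<b$, the $\sum j^{-2}$ at $\beta=b$, and divergent terms for $\beta>b$); the series $\sum e^{j(\beta-b)}$ converges iff $\beta<b$; and $\sum e^{-j^2+j\beta}$ converges for every $\beta$ by the root test. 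The symmetric statements for the $t^k_j$ series are identical. Intersecting the two convergence half-lines then yields exactly $I_k$.

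For the singleton case $I_k=\{a\}$, the construction takes $s^k_j=j^{-2}e^{-ja}$ and $t^k_j=j^{-2}e^{ja}$, so the two convergence half-lines are $(-\infty,a]$ and $[a,\infty)$ and meet exactly at $\beta=a$. For $I_k=\emptyset$, any positive choice that forces simultaneous divergence works; the simplest is $s^k_j=t^k_j=1$, since $\sum e^{j\beta}$ and $\sum e^{-j\beta}$ have disjoint convergence sets $\beta<0$ and $\beta>0$ and both diverge at $\beta=0$.

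There is no genuine obstacle here; the main care required is the bookkeeping over the various open/closed/half-open/unbounded/single-point/empty cases and checking positivity of every $s^k_j,t^k_j$, which is automatic in all the prescriptions above.
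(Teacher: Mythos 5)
Your construction is correct: the positivity of all terms means the combined series converges exactly on the intersection of the two convergence half-lines, your three elementary series estimates are right, and the case analysis (including the singleton and empty intervals, and the unbounded endpoints) is exhaustive. The paper explicitly leaves the proof of this lemma to the reader, and your argument is the standard one the author evidently has in mind --- using a polynomial factor $j^{-2}$ to close an endpoint and the bare exponential to leave it open --- so there is nothing to compare against and nothing missing.
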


Using Theorem \ref{24-10-18} we choose for each $k \geq 0$ a Bratteli diagram $\Br^k$ such that $L_{I_k}$ is affinely homeomorphic to $\mathcal T^+\left(AF(\Br^k)\right)$. For $k \geq 1$ we choose $\Br^k$ to be a simple Bratteli diagram such that $AF(\Br^k)$ is infinite dimensional; for $k=0$ simplicity is not needed. Let $v_0$ be the top vertex in $\Br^0$ and choose for all $j \geq 1$ a vertex $w_j \in \Br^0_j$. For $k \geq 1$ we let $v^k_0$ be the top vertex in $\Br^k$. For $k = 0,1,2, \cdots$ and $w \in \Br^k_V$ we let $\underline{\Br}(k)_w$ denote the number of paths in $\Br^k$ from the top vertex in $\Br^k$ to $w$. Since $\Br^k$ is simple when $k \geq 1$ we can use the procedure called telescoping in Definition 3.2 on page 68 of \cite{GPS} to arrange that 
\begin{equation}\label{25-10-18a}
\frac{1}{2} \frac{t^k_j \ \underline{\Br}(k)_w}{\underline{\Br}(0)_{w_{k+2j-1}}} \ \leq \ m^k_{2j}(w) \ \leq \  \frac{t^k_j \ \underline{\Br}(k)_w}{\underline{\Br}(0)_{w_{k +2j-1}}}
\end{equation}
for all $w \in \Br^k_{2j}$ when $j \geq 1$ and $m^k_{2j}(w)$ denotes the integer part of
$$
\frac{t^k_j \ \underline{\Br}(k)_w}{\underline{\Br}(0)_{w_{k+2j-1}}} \ .
$$
In the same way we arrange that 
\begin{equation}\label{25-10-18b}
\frac{1}{2} \frac{s^k_j \ \underline{\Br}(k)_w}{\underline{\Br}(0)_{w_{k+2j-2}}} \ \leq \ m^k_{2j-1}(w) \ \leq  \ \frac{s^k_j \ \underline{\Br}(k)_w}{\underline{\Br}(0)_{w_{k+2j-2}}}
\end{equation}
for all $w \in \Br^k_{2j-1}$ when $m^k_{2j-1}(w)$ denotes the integer part of
$$
\frac{s^k_j \ \underline{\Br}(k)_w}{\underline{\Br}(0)_{w_{k+2j-2}}} \ .
$$
We visualize $\Br^k$ by the following schematic sketch:
   
\begin{equation*}
\begin{xymatrix}{
\Br^k_0 \ar@{=>}[r] & \Br^k_1 \ar@{=>}[r]   & \Br^k_2 \ar@{=>}[r]   & \Br^k_3 \ar@{=>}[r]  &   \Br^k_4 \ar@{=>}[r]  & \hdots }
 \end{xymatrix}
\end{equation*}  
where 
\begin{equation*}
\begin{xymatrix}{
\Br^k_j \ar@{=>}[r] & \Br^k_{j+1} }
 \end{xymatrix}
\end{equation*}  
are the $j$'th and $j+1$'st levels in $\Br^k$ with arrows between them. The Bratteli diagram $\Br$ we will consider can now be visualized as follows.
\begin{equation}\label{24-10-18n}
\begin{xymatrix}{
  \ar@{=>}@[black][d] \Br^0_0 \ar@[blue][dr]  & \\
    \ar@{=>}@[black][d] \Br^0_1 \ar@[red][rrd]  \ar@[blue][dr]  & \Br^1_0 \ar@{=>}[dr]  & \\ 
  \ar@{=>}@[black][d] \Br^0_2 \ar@[red][rrd] \ar@[blue][rrrd]  \ar@[blue][dr]  &  \ar@{=>}[dr] \Br^2_0    &  \Br^1_1 \ar@{=>}[dr]  \\ 
  \ar@{=>}@[black][d] \Br^0_3 \ar@[red][rrd] \ar@[red][rrrrd]\ar@[blue][rrrd] \ar@[blue][dr] & \Br^3_0 \ar@{=>}[dr] & \Br^2_1 \ar@{=>}[dr] &   \Br^1_2 \ar@{=>}[dr] & \\
  \ar@{=>}@[black][d] \Br^0_4 \ar@[red][rrd] \ar@[red][rrrrd]\ar@[blue][rrrd] \ar@[blue][dr] \ar@[blue][rrrrrd]  & \ar@{=>}[dr] \Br^4_0 & \ar@{=>}[dr] \Br^3_1  &    \ar@{=>}[dr] \Br^2_2&   \Br^1_3 \ar@{=>}[dr]&\\
  \ar@{=>}@[black][d] \Br^0_5  \ar@[red][rrd] \ar@[red][rrrrd]\ar@[blue][rrrd] \ar@[blue][dr] \ar@[blue][rrrrrd] \ar@[red][rrrrrrd] & \ar@{=>}[dr] \Br^5_0 & \ar@{=>}[dr] \Br^4_1  &    \ar@{=>}[dr] \Br^3_2&   \Br^2_3 \ar@{=>}[dr]&\Br^1_4 \ar@{=>}[dr] &\\ 
   \ar@{=>}@[black][d] \Br^0_6 \ar@[red][rrd] \ar@[red][rrrrd]\ar@[blue][rrrd] \ar@[blue][dr] \ar@[blue][rrrrrd] \ar@[red][rrrrrrd] \ar@[blue][rrrrrrrd] & \ar@{=>}[dr] \Br^6_0 & \ar@{=>}[dr] \Br^5_1  &    \ar@{=>}[dr] \Br^4_2&   \Br^3_3 \ar@{=>}[dr]&\Br^2_4 \ar@{=>}[dr] &\Br^1_5 \ar@{=>}[dr]\\ 
  &&&&&&&&\\
 \vdots & \vdots &\vdots& \vdots & \vdots &\vdots &\vdots & \vdots &}
 \end{xymatrix}
\end{equation}

The red and blue arrows in \eqref{24-10-18n} are quivers of arrows. For $k,j \geq 1$ the red arrow 
\begin{equation*}\label{25-10-18}
\begin{xymatrix}{
\Br^0_{k+2j-2} \ar@[red][r] & \Br^{k}_{2j-1} }
 \end{xymatrix}
\end{equation*} 
signify that for each $w \in \Br^k_{2j-1}$ there are $m^k_{2j-1}(w)$ arrows going from $w_{k+2j-2} \in \Br^0_{k+2j-2}$ to the vertex $w$, and for $j \geq 0$ the blue arrow
\begin{equation}\label{25-10-18m}
\begin{xymatrix}{
\Br^0_{k+2j-1} \ar@[blue][r] & \Br^{k}_{2j} }
 \end{xymatrix}
\end{equation} 
signify that for each $w \in \Br^k_{2j}$ there are $m^k_{2j}(w)$ arrows going from $w_{k+2j-1} \in \Br^0_{k+2j-1}$ to $w$, also for $j =0$ when we put $m^k_0(v^k_0) = 1$.

To define the potential $F : \Br_{Ar} \to \mathbb R$ we set $F(a) = 0$ for all arrows $a \in \bigcup_{k=0}^{\infty} \Br^k_{Ar} \subseteq \Br_{Ar}$. To define $F$ on the arrows in the quivers visualized by the blue and red arrows in \eqref{24-10-18n} let $\kappa : \mathbb N \cup \{0\} \to \mathbb Z$ be the function such that
$\kappa( 2j) = j \ \text{and} \ \kappa(2j+1) = - j$. We set $F(a) = \kappa(j)$ when $s(a) \in \Br^0_V$ and $r(a) \in \Br^k_j$ for some $k \geq 1$. For $k =0,1,2, \cdots $, let $\{A^{(k,j)}\}$ be the projective matrix system over $\Br^k$ corresponding to the zero potential; that is,
$$
A^{(k,j)}_{v,w} = \# r^{-1}(w) \cap s^{-1}(v)
$$
when $(v,w) \in \Br^k_{j-1} \times \Br^k_j$. Let $\beta \in \mathbb R$ and let $\{A^{(j)}(\beta)\}$ be the projective matrix system over $\Br$ given as in \eqref{27-09-18a} from the potential $F$ we have just defined.

\begin{lemma}\label{25-10-18X}
Let $\psi \in \varprojlim_j A^{(k,j)}, \ \psi \neq 0$.  
\begin{itemize}
\item There is an element $\psi' \in \varprojlim_j A^{(j)}(\beta)$ such that ${\psi'}_w= \psi_w$ for all $  w \in \Br^k_{V}$ if and only if $\beta \in I_k$, and
\item when $\beta \in I_k$ there is an element $\overline{\psi} \in \varprojlim_j A^{(j)}(\beta)$ such that 
\begin{itemize}
\item $\overline{\psi}_w= \psi_w, \ w \in \Br^k_{V}$,
\item $\overline{\psi}_v = 0$ when $v \in \Br_V \backslash \left(\Br^0_V \cup \Br^k_V\right)$, and
\item $\overline{\psi}_v \leq \varphi_v$ for all $v \in \Br_V$ when $\varphi \in \varprojlim_j A^{(j)}(\beta)$ satisfies that $\varphi_w \geq \psi_w$ for all $w \in \Br^k_V$.
\end{itemize}
\end{itemize} 
\end{lemma}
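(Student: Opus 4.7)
The vertex set $\Br_V = \Br^0_V \sqcup \bigsqcup_{k' \geq 1} \Br^{k'}_V$ decouples in the following sense: for $v \in \Br^{k'}_j$ with $k' \geq 1$ all arrows out of $v$ lie inside $\Br^{k'}$ and carry zero potential, so for every $\varphi \in \varprojlim_j A^{(j)}(\beta)$ the restriction $\varphi|_{\Br^{k'}_V}$ automatically belongs to $\varprojlim_j A^{(k',j)}$, and iterating its projective relation up to the top of $\Br^{k'}$ gives $\sum_{w \in \Br^{k'}_j} \underline{\Br}(k')_w\, \varphi_w = \varphi_{v^{k'}_0}$ for every $j$. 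The only nontrivial constraint is at $v \in \Br^0_{n-1}$:
\[
\varphi_v = \sum_{w \in \Br^0_n} A^{(0, n)}_{v, w}\, \varphi_w + \delta_{v, w_{n-1}} \sum_{k' = 1}^{n} C_{k', n}(\varphi),
\]
where $C_{k', n}(\varphi)$ is the weight of the red or blue quiver out of $w_{n-1}$ into $\Br^{k'}_{n-k'}$. Substituting the bounds \eqref{25-10-18a}, \eqref{25-10-18b} on $m^{k'}_j(w)$ together with the identity above, one checks that $\underline{\Br}(0)_{w_{n-1}}\, C_{k', n}(\varphi)$ lies in $[\tfrac{1}{2}, 1] \cdot \varphi_{v^{k'}_0}\, t^{k'}_j e^{-\beta j}$ when $n - k' = 2j$ with $j \geq 1$, in $[\tfrac{1}{2}, 1] \cdot \varphi_{v^{k'}_0}\, s^{k'}_j e^{\beta(j-1)}$ when $n - k' = 2j - 1$, and equals $\underline{\Br}(0)_{w_{k'-1}}\, \varphi_{v^{k'}_0}$ when $n = k'$.

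For the forward direction of the first assertion, I iterate the relation above starting at $v_0$; since all summands are non-negative, I may drop every $C_{k', \cdot}$ with $k' \neq k$ and obtain
\[
\psi'_{v_0} \geq \sum_{m \geq k} \underline{\Br}(0)_{w_{m-1}}\, C_{k, m}(\psi) \geq \tfrac{1}{2}\, \psi_{v^k_0} \sum_{j \geq 1}\bigl(t^k_j e^{-\beta j} + s^k_j e^{\beta(j-1)}\bigr).
\]
Since $\psi \neq 0$ forces $\psi_{v^k_0} > 0$ via Lemma~\ref{21-09-18bb}, the finiteness of $\psi'_{v_0}$ and Lemma~\ref{13-10-18a} together give $\beta \in I_k$.

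For the converse and for the second assertion, assume $\beta \in I_k$. Let $c_m := C_{k, m}(\psi)$ and, for $v \in \Br^0_{n-1}$, let $P(v, w_{m-1}) := (A^{(0, n)} \cdots A^{(0, m-1)})_{v, w_{m-1}}$ count paths in $\Br^0$ from $v$ to $w_{m-1}$, with the convention $P(v, w_{n-1}) = \delta_{v, w_{n-1}}$. I then define
\[
\overline{\psi}_v := \sum_{m \geq \max(n, k)} P(v, w_{m-1})\, c_m \ (v \in \Br^0_{n-1}), \qquad \overline{\psi}_w := \psi_w \ (w \in \Br^k_V),
\]
and $\overline{\psi} := 0$ on $\Br_V \setminus (\Br^0_V \cup \Br^k_V)$. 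Convergence of $\overline{\psi}_{v_0}$ is equivalent to $\beta \in I_k$ by the estimate of the previous paragraph; for other $v \in \Br^0_V$ the bound $P(v, w_{m-1}) \leq \underline{\Br}(0)_{w_{m-1}}/\underline{\Br}(0)_v$, coming from the path decomposition $\underline{\Br}(0)_{w_{m-1}} = \sum_{v' \in \Br^0_{n-1}} \underline{\Br}(0)_{v'}\, P(v', w_{m-1})$, suffices. The projective relation at $v \in \Br^0_{n-1}$ reduces to $\sum_{w \in \Br^0_n} A^{(0, n)}_{v, w} P(w, w_{m-1}) = P(v, w_{m-1})$ for $m > n$, while the $m = n$ summand $P(v, w_{n-1}) c_n = \delta_{v, w_{n-1}} c_n$ absorbs the source term; on $\Br^{k'}_V$ the relations hold trivially.

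For minimality, any $\varphi \in \varprojlim_j A^{(j)}(\beta)$ with $\varphi_w \geq \psi_w$ on $\Br^k_V$ has $C_{k, m}(\varphi) \geq c_m$ for every $m$, so $\varphi_v \geq \sum_{w \in \Br^0_n} A^{(0, n)}_{v, w} \varphi_w + \delta_{v, w_{n-1}} c_n$ at $v \in \Br^0_{n-1}$. Iterating this $N$ times and dropping the non-negative remainder at level $n + N - 1$ yields $\varphi_v \geq \sum_{m = n}^{n + N - 1} P(v, w_{m-1}) c_m$, and $N \to \infty$ gives $\varphi_v \geq \overline{\psi}_v$; on $\Br^k_V$ and $\Br_V \setminus (\Br^0_V \cup \Br^k_V)$ the inequality is immediate. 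The chief difficulty in all this is bookkeeping: keeping the shift between the level of $\Br^k$ and the corresponding level of $\Br$ straight (the blue quiver begins at $n = k$ with the exceptional single arrow $w_{k-1} \to v^k_0$, the red one at $n = k + 1$), and pairing the $t^k_j$-series with the positive-$\beta$ half and the $s^k_j$-series with the negative-$\beta$ half of Lemma~\ref{13-10-18a}.
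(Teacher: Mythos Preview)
Your proof is correct and rests on the same idea as the paper's: the minimal extension $\overline{\psi}$ is obtained by summing, over each vertex $v\in\Br^0$, the weighted contributions of all paths from $v$ into $\Br^k$, and the finiteness of this sum is equivalent to $\beta\in I_k$ via the estimates \eqref{25-10-18a}--\eqref{25-10-18b} and Lemma~\ref{13-10-18a}. The difference is organizational. The paper builds $\overline{\psi}$ as the monotone limit of the iterates $\psi^{(n)}=A(\beta)^n\psi^{(0)}$, where $\psi^{(0)}$ is $\psi$ on $\Br^k_V$ and zero elsewhere; this gives minimality for free (each $\psi^{(n)}$ is dominated by any admissible $\varphi$), and the projective relation is automatic in the limit. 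You instead write down the closed form $\overline{\psi}_v=\sum_{m\ge\max(n,k)}P(v,w_{m-1})\,c_m$ directly and verify convergence, the projective relation, and minimality by hand. Your formula is exactly the expression \eqref{25-10-18g} that the paper derives in the next lemma to prove continuity of $\psi\mapsto\overline{\psi}$, so in effect you have merged the construction of Lemma~\ref{25-10-18X} with that computation. One small omission: like the paper, your argument tacitly assumes $k\ge1$ (for $k=0$ the two clauses of your definition of $\overline{\psi}$ on $\Br^0_V$ would conflict), but the case $k=0$ is genuinely trivial since $I_0=\mathbb R$ and $\overline{\psi}$ is just $\psi$ extended by zero.
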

\begin{proof} The case $k =0$ is trivial and we assume that $k \geq 1$. To simplify the notation we define the matrix $A(\beta)$ over $\Br_V$ such that
$$
A(\beta)_{v,w} \ = \sum_{a \in r^{-1}(w)\cap s^{-1}(v)} e^{-\beta F(a)} \ 
$$
for all $v,w \in \Br_V$. Define $\psi^{(n)} \in [0,\infty)^{\Br_V}, \ n =0,1,2,3, \cdots$, recursively such that
$$
\psi^{(0)}_v = \begin{cases} \psi_v , \ & \ v \in \Br^k_V \\ 0, & \ v \notin \Br^k_V \ , \end{cases}
$$
and 
$$
\psi^{(n)}_v = \sum_{w \in \Br_V} A(\beta)_{v,w} \psi^{(n-1)}_w
$$
when $n \geq 1$ and $v \in \Br_V$. Then
\begin{itemize}
\item ${\psi}^{(n)}_w= \psi_w, \ w \in \Br^k_{V}$,
\item $\psi^{(n)}_v = 0$ when $v \in \Br_V \backslash \left(\Br^0_V \cup \Br^k_V\right)$,
\item $\psi^{(n-1)}_v \leq \psi^{(n)}_v, \ v \in \Br_V$, and 
\item ${\psi}^{(n)}_v \leq \varphi_v$ for all $v \in \Br_V$ when $\varphi \in \varprojlim_j A^{(j)}(\beta)$ satisfies that $\varphi_w \geq \psi_w$ for all $w \in \Br^k_V$.
\end{itemize} 
If $\lim_{n \to \infty} \psi^{(n)}_v < \infty$ for all $v \in \Br_V$ the resulting vector $\overline{\psi} = \lim_{n \to \infty} \psi^{(n)}$ will have the properties stated in the second item of the lemma. Note that this happens if and only if  $\lim_{n \to \infty} \psi^{(n)}_{v_0} \ < \ \infty$ and hence what remains is to show that $\lim_{n \to \infty} \psi^{(n)}_{v_0} \ < \ \infty$ if and only if $\beta \in I_k$. To this end we note that
$$
\psi^{(n+k-1)}_{v_0} = \sum_{j=0}^{n-1} \sum_{u \in \Br^k_{j}} \underline{\Br}(0)_{w_{k+j-1}}e^{-\beta \kappa(j)}m^k_j(u)\psi_u  \ . 
$$
It follows that $\lim_{n \to \infty} \psi^{(n)}_{v_0} \ < \ \infty$ if and only if
\begin{equation}\label{25-10-18e}
\sum_{j= 1}^{\infty} \sum_{u \in \Br^k_{2j}} \underline{\Br}(0)_{w_{k+2j-1}} e^{-\beta j} m^k_{2j}(u) \psi_u \ < \ \infty
\end{equation}
and
\begin{equation}\label{25-10-18f}
\sum_{j= 0}^{\infty} \sum_{u \in \Br^k_{2j+1}} \underline{\Br}(0)_{w_{k+2j}} e^{\beta j}m^k_{2j+1}(u) \psi_u \ < \ \infty \ .
\end{equation}
It follows from \eqref{25-10-18a} that
\begin{equation}\label{25-10-18d}
\begin{split}
&\frac{1}{2} \sum_{j= 1}^{\infty} \sum_{u \in \Br^k_{2j}}  e^{-\beta j} t^k_j \underline{\Br}(k)_u \psi_u  \ \leq \
\sum_{j= 1}^{\infty} \sum_{u \in \Br^k_{2j}} \underline{\Br}(0)_{w_{k+2j -1}} e^{-\beta j}m^k_{2j}(u) \psi_u \\
& \leq \sum_{j= 1}^{\infty} \sum_{u \in \Br^k_{2j}}  e^{-\beta j} t^k_j \underline{\Br}(k)_u \psi_u \ .
\end{split}
\end{equation}
Since $\sum_{u \in \Br^k_{2j}}  \underline{\Br}(k)_u \psi_u = \psi_{v^k_0} > 0$ it follows from \eqref{25-10-18d} that \eqref{25-10-18e} holds iff  $\sum_{j=1}^{\infty} t^k_j e^{-j \beta} \ < \ \infty$. Similarly, using \eqref{25-10-18b} it follows that \eqref{25-10-18f} holds iff  $\sum_{j=1}^{\infty} s^k_j e^{j \beta} \ < \ \infty$. By the choice of $\{s^k_j\}$ and $\{t^j_k\}$, cf. Lemma \ref{13-10-18a}, it follows that $\lim_{n \to \infty} \psi^{(n)}_{v_0} \ < \ \infty$ if and only if $\beta \in I_k$.
\end{proof}

The element $\overline{\psi}$ defined in Lemma \ref{25-10-18X} is clearly unique, being the minimal element of $\varprojlim_j A^{(j)}(\beta)$ extending $\psi$. We set $\overline{\psi} = 0$ when $\psi = 0$.

\begin{lemma}\label{23-10-18} Let $\beta \in I_k$. The map
$$
\varprojlim_j A^{(k,j)}\ni \psi \ \mapsto \ \overline{\psi} \in \varprojlim_j A^{(j)}(\beta)
$$
is injective, continuous and affine. 
\end{lemma}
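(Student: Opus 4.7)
My plan is to verify the three properties separately, all hinging on the construction of $\overline{\psi}$ from the proof of Lemma \ref{25-10-18X} as the pointwise monotone limit $\overline{\psi} = \lim_n \psi^{(n)}$. Affineness follows because the zero-extension $\psi \mapsto \psi^{(0)}$ is linear in $\psi$ and each iterate $\psi^{(n)} = A(\beta)\psi^{(n-1)}$ is linear; a pointwise limit of linear maps is linear, and linearity implies affineness. Injectivity is immediate from the identity $\overline{\psi}_w = \psi_w$ for $w \in \Br^k_V$.

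For continuity, fix a sequence $\psi(n) \to \psi$ in $\varprojlim_j A^{(k,j)}$. We need to show $\overline{\psi(n)}_v \to \overline{\psi}_v$ for every $v \in \Br_V$. The cases $v \in \Br^k_V$ (trivially $\psi(n)_v \to \psi_v$) and $v \in \Br^m_V$ with $m \neq 0,k$ (both sides zero) require no argument. The substantive case is $v \in \Br^0_l$. Unfolding the recursion and observing that the only paths in $\Br$ starting at $v$ which contribute non-zero weight must stay in $\Br^0$ for some steps, cross into $\Br^k$ via a single coloured arrow, and then continue in $\Br^k$, one obtains the explicit formula
\begin{equation*}
\overline{\psi}_v \;=\; \sum_{j' \geq \max(0,\,l-k+1)} N^0(v, w_{j'+k-1})\, e^{-\beta \kappa(j')} \sum_{u' \in \Br^k_{j'}} m^k_{j'}(u')\, \psi^{j'}_{u'},
\end{equation*}
where $N^0(v, w_m)$ denotes the number of paths in $\Br^0$ from $v$ to $w_m$. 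For $v = v_0$ this recovers the series in the proof of Lemma \ref{25-10-18X}.

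For each fixed $j'$ the inner sum is a finite linear combination of coordinates of $\psi$, hence depends continuously on $\psi$. To interchange limit and sum it suffices to bound the tail uniformly in $n$. Combining \eqref{25-10-18a}--\eqref{25-10-18b} with the inverse-limit identity $\sum_{u' \in \Br^k_{j'}} \underline{\Br}(k)_{u'} \psi^{j'}_{u'} = \psi^0$ yields
\begin{equation*}
\sum_{u' \in \Br^k_{j'}} m^k_{j'}(u')\, \psi^{j'}_{u'} \;\leq\; \frac{c^k_{j'}}{\underline{\Br}(0)_{w_{j'+k-1}}}\, \psi^0,
\end{equation*}
with $c^k_{j'} = t^k_{j''}$ for $j' = 2j''$ and $c^k_{j'} = s^k_{j''}$ for $j' = 2j''-1$. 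Since $N^0(v, w_m) \leq \underline{\Br}(0)_{w_m}/\underline{\Br}(0)_v$, the $j'$-th summand is dominated by $\underline{\Br}(0)_v^{-1}\, e^{-\beta \kappa(j')} c^k_{j'}\, \psi^0$, and summing over $j'$ produces a majorant whose convergence amounts to $\sum_{j''} (t^k_{j''} e^{-\beta j''} + s^k_{j''} e^{\beta j''}) < \infty$; by Lemma \ref{13-10-18a} this is exactly the condition $\beta \in I_k$. Since $\psi(n)^0 \to \psi^0$ gives $\sup_n \psi(n)^0 < \infty$, dominated convergence yields $\overline{\psi(n)}_v \to \overline{\psi}_v$.

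The main obstacle is this continuity analysis at vertices of $\Br^0$: only by reading off $\overline{\psi}_v$ as a convergent series over the crossing points into $\Br^k$ --- which uses the minimality of $\overline{\psi}$ to eliminate any positive $\Br^0$-harmonic contribution --- and by invoking the summability supplied by Lemma \ref{13-10-18a} can the dominated convergence argument be pushed through. This is precisely where the hypothesis $\beta \in I_k$ is essential, and it is the same input that makes $\overline{\psi}$ well-defined in the first place.
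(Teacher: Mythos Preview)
Your proof is correct and follows essentially the same route as the paper's: both reduce continuity to the analysis of $\overline{\psi}_v$ for $v\in\Br^0$, express this as an explicit series indexed by the crossing level into $\Br^k$, and obtain a uniform (in $\psi$ bounded) tail bound from \eqref{25-10-18a}--\eqref{25-10-18b} together with $\sum_u \underline{\Br}(k)_u\psi_u=\psi_{v^k_0}$, so that the summability supplied by Lemma~\ref{13-10-18a} for $\beta\in I_k$ yields continuity. The paper writes the coefficient as $A(\beta)^n_{v,w_{j+n-1}}$, which coincides with your $N^0(v,w_m)$ since paths between two $\Br^0$-vertices must stay in $\Br^0$ and the potential vanishes there; your bound $N^0(v,w_m)\le \underline{\Br}(0)_{w_m}/\underline{\Br}(0)_v$ is exactly the paper's use of $A(\beta)^n_{v,w_{j+n-1}}\le \underline{\Br}(0)_{w_{j+n-1}}$.
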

\begin{proof} It suffices to show that $\psi \to \overline{\psi}_v$ is continuous and affine when $v \in \Br^0_{j-1}$ and $j-1 \geq k$. In this case, with the notation from the proof of Lemma \ref{25-10-18f},
\begin{equation}\label{25-10-18g}
\overline{\psi}_v = \sum_{n=0}^{\infty} \sum_{u \in \Br^k_{j+n-k}} A(\beta)^n_{v,w_{j+n-1}}e^{-\beta \kappa(j+n-k)}m^k_{j+n-k}(u)\psi_u \ ,
\end{equation}
when we use the convention that $A(\beta)^0$ is the identity matrix, i.e. $A(\beta)^0_{u,v} = \delta_{u,v}$. Since
$$
\psi \ \mapsto \ \sum_{n=0}^{N}  \sum_{u \in \Br^k_{j+n-k}} A(\beta)^n_{v,w_{j+n-1}}e^{-\beta \kappa(j+n-k)}m^k_{j+n-k}(u)\psi_u
$$
is continuous and affine for all $N$, it suffices to show that the sum \eqref{25-10-18g} converges uniformly on 
$$
\left\{ \psi \in \varprojlim_j A^{(k,j)}: \  \psi_{v^k_0} \leq R \right\}
$$
for each $R > 0$. For this consider first an $n$ such that $j+n-k$ is even; say $j+n-k = 2l$. Since
$$
 A(\beta)^n_{v,w_{j+n-1}} \leq \underline{\Br}(0)_{w_{j+n-1}}
$$
it follows from \eqref{25-10-18a} that
\begin{equation*}
\begin{split}
&\sum_{u \in \Br^k_{j+n -k}} A(\beta)^n_{v,w_{j+n-1}}e^{-\beta \kappa(j+n-k)}m^k_{j+n-k}(u)\psi_u \\
& \leq \sum_{u \in \Br^k_{j+n-k}} t^k_{l} e^{-\beta l} \underline{\Br}(k)_u \psi_u \ = \ t^k_{l} e^{-\beta l} \psi_{v^k_0} \leq Rt^k_{l} e^{-\beta l} \ 
\end{split}
\end{equation*}
when $\psi_{v^k_0} \leq R$. When $j+n-k$ is odd, say $j+n -k= 2l+1$, we find in the same way that
$$
\sum_{u \in \Br^k_{j+n-k}} A(\beta)^n_{v,w_{j +n-1}}e^{-\beta \kappa(j+n-k)}m^k_{j+n-k}(u)\psi_u \leq Rs^k_{l} e^{\beta l} \ 
$$
when $\psi_{v^k_0} \leq R$. Therefore the desired uniform convergence follows because $\beta \in I_k$.
\end{proof}

\begin{lemma}\label{23-10-18a} Let $\beta \in I_k$. The set $\left\{ \overline{\psi} : \ \psi \in \varprojlim_j A^{(k,j)}\right\}$ is a closed convex face in $\varprojlim_j A^{(j)}(\beta)$.
\end{lemma}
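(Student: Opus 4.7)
Denote the image set by $\Lambda := \{\overline{\psi} : \psi \in \varprojlim_j A^{(k,j)}\}$. The plan is to establish convexity, closedness, and the face property separately, using three ingredients: the affinity from Lemma \ref{23-10-18} (which upgrades to positive linearity because the recursive definition of $\psi^{(n)}$ in the proof of Lemma \ref{25-10-18X} is manifestly linear in $\psi$, and monotone limits preserve the linear relations), the continuity from the same lemma, and the minimality assertion in the last sub-bullet of the second item of Lemma \ref{25-10-18X}.

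Convexity of $\Lambda$ is then immediate. For closedness, suppose $\overline{\psi_n} \to \varphi$ in $\varprojlim_j A^{(j)}(\beta)$. Since the topology is the product topology, convergence is coordinatewise; in particular $(\psi_n)_w \to \varphi_w$ for each $w \in \Br^k_V$, so I would set $\psi := \varphi|_{\Br^k_V}$. The set $\varprojlim_j A^{(k,j)}$ is closed by Lemma \ref{21-09-18bb}, so $\psi$ lies in it. Continuity of the extension map then gives $\overline{\psi_n} \to \overline{\psi}$, forcing $\varphi = \overline{\psi} \in \Lambda$.

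For the face property, suppose $\overline{\psi} = \alpha_1 + \alpha_2$ with $\alpha_1, \alpha_2 \in \varprojlim_j A^{(j)}(\beta)$. All coordinates are non-negative, and $\overline{\psi}_v = 0$ for $v \in \Br_V \backslash (\Br^0_V \cup \Br^k_V)$, so $(\alpha_i)_v = 0$ for such $v$. Setting $\psi^{(i)} := \alpha_i|_{\Br^k_V}$, the observation that every vertex of $\Br^k$ emits arrows only into $\Br^k$ means the chain equation for $\alpha_i$ restricts along $\Br^k_V$ to the chain equation for $\{A^{(k,j)}\}$; hence $\psi^{(i)} \in \varprojlim_j A^{(k,j)}$. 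Minimality then gives $(\alpha_i)_v \geq \overline{\psi^{(i)}}_v$ for every $v$. Combining this with positive linearity,
$$
\overline{\psi^{(1)}} + \overline{\psi^{(2)}} \; = \; \overline{\psi^{(1)} + \psi^{(2)}} \; = \; \overline{\psi} \; = \; \alpha_1 + \alpha_2,
$$
and the two coordinatewise inequalities must therefore be equalities, i.e.\ $\alpha_i = \overline{\psi^{(i)}} \in \Lambda$.

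I expect the face property to be the main hurdle, since it is the only step where one genuinely needs the minimality of $\overline{\psi}$ among extensions of $\psi$ and must couple it with positive linearity; convexity and closedness follow rather directly from the affinity, continuity, and injectivity already available in Lemma \ref{23-10-18} together with the product-topology structure of $\varprojlim_j A^{(k,j)}$.
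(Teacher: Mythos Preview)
Your proof is correct and follows essentially the same route as the paper's: closedness and convexity come from the continuity and affinity of the extension map in Lemma~\ref{23-10-18}, and the face property comes from coupling the minimality in Lemma~\ref{25-10-18X} with affinity of $\psi\mapsto\overline{\psi}$. The only cosmetic differences are that you phrase the face condition additively for cones (the paper uses the convex-combination form) and that you make explicit why $\alpha_i|_{\Br^k_V}\in\varprojlim_j A^{(k,j)}$, a point the paper leaves implicit.
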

\begin{proof} Set $\mathcal F_k = \left\{ \overline{\psi} : \ \psi \in \varprojlim_j A^{(k,j)}\right\}$. It follows from Lemma \ref{23-10-18} that $\mathcal F_k$ is closed and convex. To show that $\mathcal F_k$ is a face in $\varprojlim_j A^{(j)}(\beta)$ consider $\psi \in \varprojlim_j A^{(k,j)}, \ \varphi^1,\varphi^2 \in\varprojlim_j A^{(j)}(\beta)$ and $t \in ]0,1[$ such that $\overline{\psi} = t\varphi^1 + (1-t)\varphi^2$. It follows from the minimality property in Lemma \ref{25-10-18X} that 
$$
\varphi^i \geq \overline{\varphi^i|_{\Br^k_V}}, \ i = 1,2 \ .
$$
Since $\psi = t\varphi^1|_{\Br^k_V} +(1-t)\varphi^2|_{\Br^k_V}$ we have also that 
$$
 \overline{\psi} = t\overline{\varphi^1|_{\Br^k_V}} + (1-t)\overline{\varphi^2|_{\Br^k_V}} \ ,
 $$
 and hence that
 $$
 \overline{\psi}  = t\varphi^1 + (1-t)\varphi^2 \geq t \overline{\varphi^1|_{\Br^k_V}} + (1-t) \overline{\varphi^2|_{\Br^k_V}} = \overline{\psi} \ .
 $$
 It follows that $\varphi^i =\overline{\varphi^i|_{\Br^k_V}} \in \mathcal F_k, \ i = 1,2$.
\end{proof}

\emph{Proof of Proposition \ref{13-10-18c}:} We use Proposition \ref{21-09-18(90)b} to identify $\mathbb R^+ S^{F}_{\beta}$ and $\varprojlim_j A^{(j)}(\beta)$, and in that picture we set
$$
L'_I = \left\{\overline{\psi} : \ \psi \in \varprojlim_j A^{(k,j)} \right\} \ 
$$
when $I = I_k$. In this way the statement in the first item in Proposition \ref{13-10-18c} follows from the preceding lemmas.  To prove the statement of the second item set $\mathbb J_{\beta} = \left\{k \in \mathbb N \cup \{0\}: \ \beta \in I_k \right\}$ and let $\psi \in  \varprojlim_j A^{(j)}(\beta)$. It follows from the first item in Lemma \ref{25-10-18X} that $\psi|_{\Br^m_V} = 0$ when $m \notin \mathbb J_{\beta}$ and from the second that
$$
\sum_{k \in \mathbb J_{\beta} \backslash \{0\}} \overline{\psi|_{\Br^k_V}} \leq \psi \ .
$$ 
Set
$$
\psi^0 = \psi \ - \sum_{k \in \mathbb J_{\beta} \backslash \{0\}} \overline{\psi|_{\Br^k_V}} 
$$
and note that $\psi^0 \in \mathcal F_0$. Set $\psi^k = \overline{\psi|_{\Br^k_V}}, \ k \in \mathbb J_{\beta}\backslash \{0\}$, and note that
$$
\psi = \sum_{k \in \mathbb J_{\beta}} \psi^k
$$
with point-wise convergence on $\Br_V$. It follows from Remark \ref{28-10-18} that the corresponding sum in $AF(\Br)^*$ is norm-convergent.
\qed

\begin{prop}\label{13-11-18} In the setting of Proposition \ref{13-10-18c}, assume that $L_I$ has a Bauer simplex $S_I$ as a base for all $I \in \mathbb I$ and that $S_{I_0}$ only contains one element. It follows that $S^F_{\beta}$ is a Bauer simplex whose extreme boundary $\partial_e S^F_{\beta}$ is homeomorphic to the one-point compactification of the topological disjoint union
\begin{equation}\label{13-11-18a}
\bigsqcup_{I \in \mathbb I_{\beta} \backslash \{I_0\}} \partial_e S_I \ 
\end{equation}
for all $\beta \in \mathbb R$.
\end{prop}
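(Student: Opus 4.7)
My plan is to exploit the explicit decomposition from \refprop{13-10-18c}. Setting $K_I = L'_I \cap S^F_{\beta}$ for each $I \in \mathbb I_{\beta}$, it is routine to verify that $K_I$ is a closed face of $S^F_{\beta}$ and also a compact base for the simplex cone $L'_I$. Applying the second part of \reflemma{24-10-18b} to the affine homeomorphism $L_I \simeq L'_I$ together with the Bauer base $S_I$ of $L_I$ shows that $K_I$ is affinely homeomorphic to $S_I$, hence Bauer with $\partial_e K_I$ homeomorphic to $\partial_e S_I$. In particular $K_{I_0}$ is a singleton $\{\omega_0\}$.

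Next I would identify the extreme points. Uniqueness of the decomposition in \refprop{13-10-18c} forces $K_I \cap K_J = \emptyset$ for $I \neq J$, since a common nonzero element would admit two distinct decompositions. For $\omega \in \partial_e S^F_{\beta}$, the decomposition $\omega = \sum_I \omega_I$ must have exactly one nonzero summand, otherwise separating off a single component would produce a nontrivial convex combination of states; thus $\omega \in K_I$ for a unique $I$. Conversely, each $K_I$ is a face of $S^F_{\beta}$, so its extreme points are extreme in $S^F_{\beta}$. Hence $\partial_e S^F_{\beta} = \bigsqcup_{I \in \mathbb I_{\beta}} \partial_e K_I$ as sets.

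The hard part is showing that $\partial_e S^F_{\beta}$ is closed. Take $\omega_n \to \omega$ with $\omega_n \in \partial_e K_{I_{m_n}}$. After passing to a subsequence, one of three cases occurs. If $m_n = 0$ infinitely often then $\omega_n = \omega_0$ on that subsequence, so $\omega = \omega_0$. If some fixed $m \neq 0$ occurs infinitely often then the subsequence lies in the closed Bauer face $K_{I_m}$, and the limit remains in $\partial_e K_{I_m}$. The delicate case is when the $m_n$ are all distinct and nonzero. Here I would translate to the projective matrix picture via \refprop{21-09-18(90)b}, where weak* convergence becomes vertex-wise convergence. By \reflemma{25-10-18X} the vector $\psi(\omega_n)$ vanishes on $\Br^m_V$ whenever $m \notin \{0, m_n\}$; for any fixed $m \geq 1$ this holds for all large $n$, so $\psi(\omega)$ vanishes on $\bigcup_{m \geq 1} \Br^m_V$. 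The explicit decomposition built in the proof of \refprop{13-10-18c} then forces $\omega \in L'_{I_0} \cap S^F_{\beta} = \{\omega_0\}$, completing the proof that $S^F_{\beta}$ is Bauer.

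To finish I would identify the topology of the extreme boundary. The same three-case analysis shows that each $\partial_e K_I$ with $I \neq I_0$ is open in $\partial_e S^F_{\beta}$: any convergent sequence from the complement must land either at $\omega_0$ or in a disjoint $K_J$ with $J \neq I$. Consequently $\partial_e S^F_{\beta} \setminus \{\omega_0\} = \bigsqcup_{I \in \mathbb I_{\beta} \setminus \{I_0\}} \partial_e K_I$ carries the disjoint-union topology and is locally compact Hausdorff, while $\{\omega_0\}$ is its one-point complement inside the compact Hausdorff space $\partial_e S^F_{\beta}$. Uniqueness of the one-point compactification then yields the claimed homeomorphism between $\partial_e S^F_{\beta}$ and the one-point compactification of $\bigsqcup_{I \in \mathbb I_{\beta} \setminus \{I_0\}} \partial_e S_I$.
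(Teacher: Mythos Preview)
Your proof is correct and follows essentially the same route as the paper: the decisive step in both is the observation, via the projective-matrix picture from \refprop{21-09-18(90)b} and \reflemma{25-10-18X}, that a sequence of extreme states escaping every $L'_I$ with $I\neq I_0$ must have all its condensation points supported on $\Br^0_V$ and hence equal to $\omega_0$. The only difference is packaging---the paper builds a bijection from the one-point compactification and checks continuity at $*$, while you first establish closedness of $\partial_e S^F_\beta$ and then invoke uniqueness of the one-point compactification---but the underlying argument is the same.
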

\begin{proof} As in the preceding proof we identify $\varprojlim_j A^{(j)}(\beta)$ with $\mathbb R^+ S^F_{\beta}$. Let $*$ be the point at infinity in the one-point compactification of \eqref{13-11-18a}. By Proposition \ref{13-10-18c} we have a continuous injective map
$$
\Phi : \bigsqcup_{I \in \mathbb I_{\beta} \backslash \{I_0\}} \partial_e S_I   \ \to \ \partial_e S^F_{\beta} 
$$
which we extend such that $\Phi(*) = \omega_0$ where $\omega_0$ is the unique element of $L'_{I_0} \cap S^F_{\beta}$. It follows from Proposition \ref{13-10-18c} that the extension is both injective and surjective. It suffices therefore to show that it is also continuous. Let $\{x_n\}$ be a sequence in \eqref{13-11-18a} converging to $*$. Set $\psi^n = \Phi(x_n)$ and note that $\{\psi^n\}$ is a sequence in $\partial_e S^F_{\beta}$ which eventually leaves $L'_I$ for all $I \in \mathbb I_{\beta} \backslash \{I_0\}$. It follows that every condensation point $\psi$ of $\{\psi^n\}$ must satisfy that $\psi_{v^k_0} = 0$ for all $k \geq 1$ and hence that $\psi \in L'_{I_0} \cap S^F_{\beta} = \{\omega_0\}$. It follows that $\lim_{n \to \infty} \psi^n = \omega_0$.

\end{proof}

\section{Generalized gauge actions on a UHF algebra} 

Given a Bratteli diagram $\Br$ we define $\Br_{j} \times \Br_{j-1}$-matrices $\Br^{(j)} = \left(\Br^{(j)}_{v,w}\right)$ such that
$$
\Br^{(j)}_{v,w} = \# r^{-1}(v) \cap s^{-1}(w) \ ,
$$
i.e. $\Br^{(j)}_{v,w}$ is the number of arrows from $w \in \Br_{j-1}$ to $v \in \Br_j$. Note that $\Br^{(j)}$ is the transpose of the matrix $A^j(\beta)$ in the projective matrix system over $\Br$ corresponding to the zero potential, cf. \eqref{27-09-18a}. We call $\{\Br^{(j)}\}$ the multiplicity matrices of $\Br$. 

We fix in the following an (infinite dimensional) UHF algebra $U$, given by a Bratteli diagram with one vertex on each level and multiplicity matrices given by natural numbers $d_j \geq 2$.

\begin{lemma}\label{01-11-18} Let $\Br$ be a Bratteli diagram. There is a Bratteli diagram $\Br'$ such that $\Br_V = \Br'_V$, $\Br'^{(j)}_{v,w} > \Br^{(j)}_{v,w}$ for all $(v,w) \in \Br_j \times \Br_{j-1}$ and
$$
\phi\left(\Br'^{(j)} - \Br^{(j)}\right) \geq \frac{1}{4}
$$ 
for all $j = 1,2,3, \cdots$, and $AF(\Br')$ is $*$-isomorphic to $U$.
\end{lemma}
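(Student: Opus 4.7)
The plan is to take each $\Br'^{(j)}$ to be a rank-one non-negative integer matrix. Concretely, for each $j \geq 1$ I choose positive integers $\lambda^j_w, \ w \in \Br_{j-1}$, and set $\Br'^{(j)}_{v,w} = \lambda^j_w$ for every $v \in \Br_j$, so that every row of $\Br'^{(j)}$ equals $(\lambda^j_w)_{w \in \Br_{j-1}}$. The only constraint I impose on the $\lambda^j_w$ is
$$
\lambda^j_w \ \geq\ \max\bigl(1,\ 2\max_{v \in \Br_j}\Br^{(j)}_{v,w}\bigr) .
$$
This clearly forces $\Br'^{(j)}_{v,w} > \Br^{(j)}_{v,w}$. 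Writing $B^{(j)}_{v,w} := \lambda^j_w$ and $A := \Br^{(j)}$, the constraint moreover gives $A_{v,w}/B^{(j)}_{v,w} \leq 1/2$. Since $B^{(j)}$ is rank-one with strictly positive entries, $\phi(B^{(j)}) = 1$, and Lemma \ref{01-11-18b} yields
$$
\phi\bigl(\Br'^{(j)} - \Br^{(j)}\bigr) \ =\ \phi(B^{(j)} - A)\ \geq\ (1 - 1/2)^2 \cdot 1\ =\ 1/4 .
$$

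Next I compute $K_0(AF(\Br'))$. Because each $\Br'^{(j)}$ has rank one, the induced map $\mathbb Z^{\Br_{j-1}} \to \mathbb Z^{\Br_j}$ sends every vector $x$ to $\bigl(\sum_w \lambda^j_w x_w\bigr)\mathbf 1_{\Br_j}$. Thus after one step every element of the inductive system lies in the rank-one sublattice $\mathbb Z\mathbf 1_{\Br_j}$, on which the next connecting map acts as multiplication by $\mu_{j+1} := \sum_{w \in \Br_j}\lambda^{j+1}_w$. Hence
$$
K_0(AF(\Br'))\ \cong\ \varinjlim\bigl(\mathbb Z,\ \times \mu_j\bigr)\ \cong\ \mathbb Z[1/(\mu_1\mu_2\cdots)]
$$
as an ordered group with unit $1$ and positive cone inherited from $\mathbb Q$. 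Since every entry of every $\Br'^{(j)}$ is $\geq 1$, the diagram $\Br'$ is strongly connected and $AF(\Br')$ is simple; therefore $AF(\Br')$ is a UHF algebra with supernatural number $\prod_j \mu_j$.

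It remains to arrange $\prod_j \mu_j = \prod_j d_j$ as supernatural numbers. I do this by telescoping $U$: choose an increasing sequence $0 = n_0 < n_1 < n_2 < \cdots$ and set $\mu_j := d_{n_{j-1}+1}d_{n_{j-1}+2}\cdots d_{n_j}$. Because each $d_k \geq 2$, by making the blocks sufficiently long I can force $\mu_j \geq M^*_j := \sum_{w \in \Br_{j-1}}\max(1,\ 2\max_v \Br^{(j)}_{v,w})$, which is precisely the lower bound on $\sum_w \lambda^j_w$ coming from the previous paragraph. I then realize $\mu_j = \sum_w \lambda^j_w$ by setting each $\lambda^j_w$ equal to its lower bound for all $w$ except one distinguished vertex that absorbs the remaining slack. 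Since UHF algebras are classified by their supernatural number, this yields $AF(\Br') \simeq U$.

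The main potential obstacle is the simultaneous satisfaction of the three requirements -- strict domination of $\Br^{(j)}$, the $\phi$-bound on the difference, and isomorphism with the prescribed UHF $U$ -- all while keeping the vertex set pinned to $\Br_V$. The rank-one ansatz resolves all three at once: the difference $\Br'^{(j)} - \Br^{(j)}$ is a small multiplicative perturbation of the strictly positive rank-one matrix $B^{(j)}$ (yielding $\phi \geq 1/4$ via Lemma \ref{01-11-18b}), the $K_0$ of the inductive system collapses to a rank-one subgroup of $\mathbb Q$ (yielding a UHF algebra), and the freedom to choose the row sums $\mu_j$ by telescoping $U$ lets us tune the supernatural number to match that of $U$.
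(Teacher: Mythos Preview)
Your proof is correct and shares the paper's core idea: make each $\Br'^{(j)}$ a rank-one strictly positive integer matrix (so $\phi(\Br'^{(j)})=1$), invoke Lemma~\ref{01-11-18b} with $\epsilon=\tfrac12$ to get the $\tfrac14$-bound on the difference, and telescope $U$ so the resulting supernatural number matches. There are two minor differences worth noting. First, you take $\Br'^{(j)}_{v,w}=\lambda^j_w$ constant in the row index $v$, whereas the paper takes it constant in the column index $w$; either choice gives a rank-one matrix with $\phi=1$, so this is immaterial. Second, for the isomorphism $AF(\Br')\simeq U$ the paper builds an auxiliary diagram $\Br''$ by interleaving singleton levels $C_j$ between the $\Br_j$'s and then telescopes in two directions---removing the $C_j$'s yields $\Br'$, removing the $\Br_j$'s yields a standard one-vertex-per-level diagram for $U$---so the isomorphism follows from the telescoping lemma alone. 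Your route computes $K_0(AF(\Br'))$ directly (the rank-one connecting maps collapse the inductive system to $\varinjlim(\mathbb Z,\times\mu_j)$) and then invokes Elliott's classification. Your argument is slightly more direct and avoids the auxiliary diagram; the paper's argument is a touch more elementary in that it stays within Bratteli's original framework and does not appeal to classification.
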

\begin{proof} For $j =1,2,3, \cdots$, let $C_j$ be a set consisting of one element $v_j$, and let $\Br''$ be the Bratteli diagram with level sets
$\Br''_{2j-1} = \Br_j, j =1,2,\cdots$, $\Br''_0 = \Br_0$ and $\Br''_{2j} = C_j, \ j = 1,2,3, \cdots$. Choose natural numbers $0 = k_0 < k_1 < k_2 < \cdots$ such that when we write
$$
\prod_{i=k_{j-1} +1}^{k_j} d_i  = S_j\left(\# \Br_j\right) + r_j \ ,
$$
where $S_j,r_j \in \mathbb N$, $r_j \leq \#\Br_j$, the estimate
\begin{equation}\label{01-11-18c}
\max \left\{ \frac{ \Br^{(j)}_{v,w}}{S_j} : \ (v,w) \in \Br_j \times \Br_{j-1} \right\} \leq \frac{1}{2} \ 
\end{equation}
holds. Choose an element $u_j \in \Br_j$ for all $j \geq 1$. For $j = 1,2,3, \cdots$ set
\begin{equation*}\label{26-09-18}
\Br''^{(2j-1)}_{v,v_{j-1}} =  \begin{cases} S_j+r_j , & \ v = u_{j} ,\\  S_j , & \ v \in \Br_j \backslash \{ u_{j}\} \  \end{cases}
\end{equation*}
and
$$
\Br''^{(2j)}_{v_j,v} = 1
$$
for all $v \in \Br''_{2j-1}$. The matrices $\{\Br''^{(j)}\}$ are the multiplicity matrices of $\Br''$. Let $\Br'$ be the Bratteli diagram obtained by removing from $\Br''$ the even level sets $\Br''_{2j} = C_j, \ j = 1,2,\cdots$, and telescoping as explained in Definition 3.2 on page 68 in \cite{GPS}. Then $\Br'_V = \Br_V$ and it follows from the choices made above that $\Br'^{(j)}$ has the stated properties: Since $\Br'^{(j)}_{x,y} = \Br'^{(j)}_{x,y'}$ it follows from the definition of $\phi$ that
$$
\phi\left(\Br'^{(j)}\right) \ \geq \ 1,
$$
and then from \eqref{01-11-18c} and Lemma \ref{01-11-18b} that
$$
\phi\left(\Br'^{(j)} - \Br^{(j)}\right) \ \geq  \ \frac{1}{4} \phi\left(\Br'^{(j)}\right) \ \geq \ \frac{1}{4} \ . 
$$
If we instead remove the odd level sets $\Br''_{2j-1} = \Br_j, \ j \geq 1$, in $\Br''$ and telescope, we obtain a Bratteli diagram for $U$. It follows that $AF(\Br') \simeq AF(\Br'') \simeq U$.
\end{proof}

The norms we consider in the following are the Euclidean norms (or $l^2$-norms) on vectors, and on matrices it is the corresponding operator norm.
\begin{lemma}\label{19-09-18b}  Let $\Br$ be a Bratteli diagram and $\{A^{(j)}\}$ a projective matrix system over $\Br$. Set 
$$
L_j = \sqrt{\# \Br_j} \max_{w \in \Br_j} \left(\left( A^{(1)}A^{(2)} \cdots A^{(j)}\right)_{v_0,w}\right)^{-1} \ 
$$
and choose $ 0 < \epsilon_k < 1, \ k =1,2,3, \cdots$, such that for some $N \in \mathbb N$ we have that
\begin{equation}\label{firstepsb}
\epsilon_k \leq 2^{-k} \left(L_k \prod_{j=1}^{k-1} \left( \left\|A^{(j)}\right\| + 1\right)\right)^{-1} \ 
\end{equation}
for all $k \geq N$. Let $\{B^{(j)}\}$ be a projective matrix system over $\Br$ such that 
\begin{itemize}
\item $\left(B^{(1)}B^{(2)} \cdots B^{(j)}\right)_{v_0,w} \geq \left(A^{(1)}A^{(2)} \cdots A^{(j)}\right)_{v_0,w}$ for all $w \in \Br_j$ and all $j \in \mathbb N$, and
\item $\left\|A^{(j)}-B^{(j)} \right\| \leq \epsilon_j $ for all $j \geq N$.
\end{itemize}
There is an affine homeomorphism $ T : \varprojlim_j A^{(j)} \to \varprojlim_j B^{(j)}$ such that 
 $$
 (T\psi)^{j-1} = \lim_{k \to \infty} B^{(j)}B^{(j+1)} \cdots B^{(j+k)}\psi^{j+k}
 $$ 
 when $\psi = \left(\psi^j\right)_{j=0}^{\infty} \in \varprojlim_j A^{(j)}$.
\end{lemma}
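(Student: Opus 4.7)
The plan is to construct $T$ by showing that its defining limit converges for every $\psi$, then verify affinity, continuity, and invertibility via the symmetric candidate $\psi\mapsto \lim_k A^{(j)}\cdots A^{(j+k)}\psi^{j+k}$ acting on $\varprojlim_j B^{(j)}$. Throughout, write $M^{(j,m)}=A^{(j)}\cdots A^{(m)}$ and $N^{(j,m)}=B^{(j)}\cdots B^{(m)}$.

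I would first establish convergence of $(T\psi)^{j-1}$ by a Cauchy argument. From $\epsilon_l<1$ one has $\|B^{(l)}\|\leq\|A^{(l)}\|+1$ (for $l\geq N$; for the finitely many smaller $l$ a uniform constant suffices). By part 1) of Lemma \ref{21-09-18bb}, $\|\psi^{j+k+1}\|\leq L_{j+k+1}\psi^0$, and since $\psi^{j+k}=A^{(j+k+1)}\psi^{j+k+1}$, the consecutive differences collapse to
$$
N^{(j,j+k+1)}\psi^{j+k+1}-N^{(j,j+k)}\psi^{j+k}=N^{(j,j+k)}\bigl(B^{(j+k+1)}-A^{(j+k+1)}\bigr)\psi^{j+k+1},
$$
whose norms are bounded, via the hypothesis \eqref{firstepsb}, by $2^{-(j+k+1)}\psi^0\bigl/\prod_{l=1}^{j-1}(\|A^{(l)}\|+1)$. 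This is summable in $k$, hence the sequence is Cauchy in the finite-dimensional space $\mathbb R^{\Br_{j-1}}$. Non-negativity is automatic as a limit of non-negative vectors, and the compatibility $(T\psi)^{j-1}=B^{(j)}(T\psi)^j$ follows by pulling the fixed matrix $B^{(j)}$ through the limit, so $T\psi\in\varprojlim_j B^{(j)}$. Affinity is immediate, and the uniformity of the above estimate on each set $\{\psi:\psi^0\leq R\}$ (compact by Lemma \ref{21-09-18bb}(2)) gives continuity.

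For invertibility I would introduce the symmetric candidate $S:\varprojlim_j B^{(j)}\to\varprojlim_j A^{(j)}$ defined by $(S\phi)^{j-1}=\lim_k M^{(j,j+k)}\phi^{j+k}$. The hypothesis $(N^{(1,k)})_{v_0,w}\geq(M^{(1,k)})_{v_0,w}$ forces the analogous bound $\|\phi^k\|\leq L_k\phi^0$ for $\phi\in\varprojlim_j B^{(j)}$, so the identical estimates show $S$ is well-defined, continuous, and affine. Iterated telescoping converts the Cauchy decomposition above into the series
$$
(T\psi)^{j-1}=\psi^{j-1}+\sum_{i\geq j}N^{(j,i-1)}\bigl(B^{(i)}-A^{(i)}\bigr)\psi^i,
$$
and in particular $(T\psi)^m-\psi^m=\sum_{i\geq m+1}N^{(m+1,i-1)}(B^{(i)}-A^{(i)})\psi^i$.

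The main step, and the main obstacle, is then to verify $S\circ T=\id$. Writing
$$
(ST\psi)^{j-1}=\lim_m M^{(j,m)}(T\psi)^m=\psi^{j-1}+\lim_m M^{(j,m)}\bigl[(T\psi)^m-\psi^m\bigr],
$$
each summand $M^{(j,m)}N^{(m+1,i-1)}(B^{(i)}-A^{(i)})\psi^i$ in the tail has norm at most $\prod_{l=j}^{i-1}(\|A^{(l)}\|+1)\cdot\epsilon_iL_i\psi^0\leq 2^{-i}\psi^0\bigl/\prod_{l=1}^{j-1}(\|A^{(l)}\|+1)$, so the error vanishes as $m\to\infty$. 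A symmetric argument yields $T\circ S=\id$, so $T$ is the asserted affine homeomorphism with inverse $S$. The delicate cancellation between the potential blow-up of $\prod_{l=j}^{i-1}(\|A^{(l)}\|+1)$ and the decay of $\epsilon_iL_i\prod_{l=1}^{i-1}(\|A^{(l)}\|+1)^{-1}$ is precisely what the hypothesis \eqref{firstepsb} is designed to enforce.
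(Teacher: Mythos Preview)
Your proof is correct and follows essentially the same approach as the paper. The paper carries out the Cauchy estimate for $T$ in the same way, constructs the same inverse $S$ using the first bulleted hypothesis to get $\|\phi^k\|\leq L_k\phi^0$, and verifies $S\circ T=\id$ by the identical telescoping; the only cosmetic difference is that you package the tail as the explicit series $(T\psi)^m-\psi^m=\sum_{i\geq m+1}N^{(m+1,i-1)}(B^{(i)}-A^{(i)})\psi^i$, whereas the paper writes the corresponding estimate as an iterated reduction of $\|A^{(j)}\cdots A^{(j+k)}B^{(j+k+1)}\cdots B^{(j+k+m)}\psi^{j+k+m}-\psi^{j-1}\|$ in $m$, absorbing the finitely many indices below $N$ into a constant $M^N$ exactly as you indicate.
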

\begin{proof} Let $\psi = (\psi^j)_{j=0}^{\infty} \in \varprojlim_j A^{(j)}$. It follows from 1) in Lemma \ref{21-09-18bb} that $\left\|\psi^j\right\| \leq L_j \psi^0$. Let 
$M \geq \max \left\{ \left\|B^{(j)}\right\|, \ \left\|A^{(j)}\right\|, \ 1 \right\}$
for all $j = 1,2,3, \cdots , N$. Then
\begin{equation}\label{20-09-18db}
\begin{split}
& \left\|B^{(j)}B^{(j+1)} \cdots B^{(j+k)}\psi^{j+k} - B^{(j)}B^{(j+1)} \cdots B^{(j+k+1)}\psi^{j+k+1} \right\| \\
& = \left\|B^{(j)}B^{(j+1)} \cdots B^{(j+k)}A^{(j+k+1)}\psi^{j+k+1} - B^{(j)}B^{(j+1)} \cdots B^{(j+k+1)}\psi^{j+k+1} \right\| \\
& \leq \left\| B^{(j)}B^{(j+1)} \cdots B^{(j+k)}\right\| \left\| A^{(j+k+1)}\psi^{j+k+1}  - B^{(j+k+1)}\psi^{j+k+1}\right\| \\
& \leq M^N \prod_{n=N+1}^{j+k} \left(\|A^{(n)}\|+1\right) \epsilon_{j+k+1} L_{j+k+1}\psi^0 \ \leq \ M^N 2^{-j-k-1} \psi^0
\end{split}
\end{equation}
for all $k \geq N$ and  all $j \in  \mathbb N$. It follows that
$$
\left\{ B^{(j)}B^{(j+1)} \cdots B^{(j+k)}\psi^{j+k}\right\}_{k=1}^{\infty}
$$
is a Cauchy-sequence in $\mathbb R^{\Br_{j-1}}$ and we set
$$
\phi^{j-1} = \lim_{k \to \infty} B^{(j)}B^{(j+1)} \cdots B^{(j+k)}\psi^{j+k} \ ,
$$
$j = 1,2,3, \cdots $. Note that $\phi = (\phi^j)_{j=0}^{\infty} \in \varprojlim_j B^{(j)}$. The assignment $T \psi = \phi$ is a map $T : \varprojlim_j A^{(j)} \to \varprojlim_j B^{(j)}$ which is clearly affine. It follows from \eqref{20-09-18db} that
$$
\left\|\phi^{j-1} -  B^{(j)}B^{(j+1)} \cdots B^{(j+k)}\psi^{j+k}\right\| \leq M^N \psi^0\sum_{n=j+k+1}^{\infty} 2^{-n} \ 
$$
for all $j \geq 1$ and $k \geq N$, implying that $T$ is continuous. 

To construct an inverse map, let $\phi = \left(\phi^j\right)_{j=0}^{\infty} \in \varprojlim_j B^{(j)}$. It follows from the first condition in the lemma that
$\phi^j_w \leq   \left(\left(A^{(1)}A^{(2)} \cdots A^{(j)}\right)_{v_0,w}\right)^{-1}\phi^0$ for all $w \in \Br_j$, and hence that $\left\|\phi^j\right\| \leq L_j\phi^0$. Therefore an estimate analogues to \eqref{20-09-18db} shows that
$$
 \left\|A^{(j)}A^{(j+1)} \cdots A^{(j+k)}\phi^{j+k} - A^{(j)}A^{(j+1)} \cdots A^{(j+k+1)}\phi^{j+k+1} \right\| \leq  M^N 2^{-j-k-1}\phi^0
 $$
 for $j \in \mathbb N$ and $k \geq N$, and we can therefore define 
 $$
 (S\phi)^{j-1} = \lim_{k \to \infty} A^{(j)}A^{(j+1)} \cdots A^{(j+k)}\phi^{j+k} 
 $$
for all $j = 0,1,2,\cdots$, giving us a continuous affine map $S : \varprojlim_j B^{(j)} \to \varprojlim_j A^{(j)}$. Let $\psi \in \varprojlim_j A^{(j)}$. Note that
\begin{equation*}
\begin{split}
&\left\| A^{(j)}\cdots A^{(j+k)}B^{(j+k+1)}\cdots B^{(j+k+m)}\psi^{j + k +m} - \psi^{j-1} \right\| \\
& = \left\| A^{(j)}\cdots A^{(j+k)}B^{(j+k+1)}\cdots B^{(j+k+m)}\psi^{j + k +m} - A^{(j)}A^{(j+1)}\cdots A^{(j+k+m)}\psi^{j + k +m} \right\| \\
&\leq \prod_{n=j}^{j+k} \left\|A^{(n)}\right\|\left\| B^{(j+k+1)}\cdots B^{(j+k+m)}\psi^{j + k +m} - \psi^{j+k} \right\| \\
& \leq \prod_{n=j}^{j+k} \left\|A^{(n)}\right\|\prod_{n=j+k+1}^{j+k+m-1} \left(\left\|A^{(n)}\right\| +1\right) \epsilon_{j+k+m}L_{j+k+m}\psi^0 \\
& \ \ \ \ \ \ \ \ \ + \prod_{n=j}^{j+k} \left\|A^{(n)}\right\|\left\| B^{(j+k+1)}\cdots B^{(j+k+m-1)}\psi^{j + k +m-1} - \psi^{j+k} \right\| \\
& \leq  M^N 2^{-j-k-m} \psi^0 + \prod_{n=j}^{j+k} \left\|A^{(n)}\right\|\left\| B^{(j+k+1)}\cdots B^{(j+k+m-1)}\psi^{j + k +m-1} - \psi^{j+k} \right\|\\
& \leq \ \cdots \\
& \leq  M^N \sum_{n=j+k+1}^{j+k+m} 2^{-n} \psi^0\ 
\end{split}
\end{equation*}
when $j,m \in \mathbb N$ and $k \geq N$. Letting $m$ go to infinity we find that
\begin{equation*}
\begin{split}
&\left\| A^{(j)}\cdots A^{(j+k)}(T\psi)^{j+k}- \psi^{j-1} \right\| \leq M^N  2^{-j-k} \psi^0
\end{split}
\end{equation*}
for all $j \in \mathbb N$, $k \geq N$. Letting $k$ go to infinity it follows that $\left(S\left(T\psi\right)\right)^{j-1} = \psi^{j-1}$, proving that $S \circ T$ is the identity on $\varprojlim_j A^{(j)}$. It follows in the same way that $T \circ S$ is the identity map on $\varprojlim_j B^{(j)}$.
\end{proof}


\begin{lemma}\label{23-09-18f} Let $\Br$ be a Bratteli diagram and $\alpha^F$ a generalized gauge action on $AF(\Br)$. There is a generalized gauge action $\alpha$ on $U$ such that the convex cones $\mathbb R^+ S^{F}_{\beta}$ and $\mathbb R^+ S^{\alpha}_{\beta}$ of positive $\beta$-KMS functionals for $\alpha^F$ and $\alpha$ are affinely homeomorphic when $\beta >0$ and $\mathbb R^+S^{\alpha}_{\beta} \simeq \mathbb R^+$ when $\beta < 0$. 
\end{lemma}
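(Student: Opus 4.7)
\noindent\emph{Plan.} The strategy is to realize $U$ as $AF(\Br')$ by Lemma \ref{01-11-18} and then transfer $F$ to a potential $F'$ on $\Br'$ that agrees with $F$ on the inherited arrows but carries a very large positive value on each ``extra'' arrow. The asymmetry $e^{-\beta\gamma_j}\to 0$ for $\beta>0$ versus $e^{-\beta\gamma_j}\to +\infty$ for $\beta<0$ will make these extras a negligible perturbation at positive inverse temperature and a dominating one at negative inverse temperature, which is exactly what is required to apply the approximate intertwining of Lemma \ref{19-09-18b} on one side and the triviality criterion of Lemma \ref{30-10-18} on the other.

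Concretely, I would first apply Lemma \ref{01-11-18} to obtain a Bratteli diagram $\Br'$ with $\Br'_V=\Br_V$, $\Br'^{(j)}_{v,w}>\Br^{(j)}_{v,w}$ for all $(v,w)\in \Br_j\times\Br_{j-1}$, $\phi(\Br'^{(j)}-\Br^{(j)})\geq 1/4$, and $AF(\Br')\simeq U$. For each such $(v,w)$ fix an injection of the $\Br^{(j)}_{v,w}$ arrows of $\Br$ from $w$ to $v$ into the corresponding $\Br'^{(j)}_{v,w}$ arrows of $\Br'$, and call the remaining $\Br'^{(j)}_{v,w}-\Br^{(j)}_{v,w}$ arrows \emph{extra}. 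Define $F':\Br'_{Ar}\to\mathbb R$ by $F'(a)=F(a)$ on inherited arrows and $F'(a)=\gamma_j$ on every extra arrow between $\Br'_{j-1}$ and $\Br'_j$, where $\{\gamma_j\}_{j\geq 1}$ is a rapidly increasing sequence of positive reals to be pinned down. Set $\alpha=\alpha^{F'}$. The projective matrix systems $\{A^{(j)}(\beta)\}$ on $\Br$ and $\{A'^{(j)}(\beta)\}$ on $\Br'$ are indexed by the common vertex sets $\Br_V=\Br'_V$ and decompose as
\[
A'^{(j)}(\beta) = A^{(j)}(\beta) + E^{(j)}(\beta), \qquad E^{(j)}(\beta)_{v,w} = \bigl(\Br'^{(j)}_{w,v}-\Br^{(j)}_{w,v}\bigr) e^{-\beta\gamma_j};
\]
in particular $\phi(E^{(j)}(\beta))=\phi(\Br'^{(j)}-\Br^{(j)})\geq 1/4$ for every $\beta\in\mathbb R$.

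For any fixed $\beta>0$, the operator norm $\|E^{(j)}(\beta)\|$ is bounded by a constant times $e^{-\beta\gamma_j}$, so choosing $\gamma_j$ to grow sufficiently quickly forces $\|E^{(j)}(\beta)\|$ below the bound $\epsilon_j$ required by Lemma \ref{19-09-18b} from some index on; that lemma then delivers an affine homeomorphism $\varprojlim_j A^{(j)}(\beta)\simeq\varprojlim_j A'^{(j)}(\beta)$, and combining with Proposition \ref{21-09-18(90)b} yields $\mathbb R^+ S^F_\beta\simeq \mathbb R^+ S^\alpha_\beta$. For any fixed $\beta<0$, the entrywise ratio $A^{(j)}(\beta)_{v,w}/E^{(j)}(\beta)_{v,w}$ is dominated by a constant times $e^{\beta(\gamma_j-M_j)}$, where $M_j$ is the maximum of $F$ over the level-$j$ arrows of $\Br$; this tends to $0$ as $\gamma_j\to\infty$, so Lemma \ref{01-11-18b} forces $\phi(A'^{(j)}(\beta))\geq 1/9$ for all large $j$, whence $\sum_j\sqrt{\phi(A'^{(j)}(\beta))}=\infty$, and Lemma \ref{30-10-18} together with Proposition \ref{21-09-18(90)b} gives $\mathbb R^+ S^\alpha_\beta\simeq\mathbb R^+$.

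The principal obstacle is that a single sequence $\{\gamma_j\}$ must simultaneously cover all the $\beta$-dependent bounds appearing in Lemma \ref{19-09-18b} for $\beta>0$ (which become increasingly stringent as $\beta\to 0^+$ or $\beta\to\infty$, through the factors $L_j(\beta)$ and $\prod_{i<j}(\|A^{(i)}(\beta)\|+1)$) and all the $\beta$-dependent thresholds needed for Lemma \ref{01-11-18b} for $\beta<0$. Each of these requirements is an eventual condition on $j$ at the given $\beta$, so a standard diagonal construction that at stage $j$ picks $\gamma_j$ large enough to cover all $\beta\in[-j,-1/j]\cup[1/j,j]$ supplies a working sequence, and once such $\{\gamma_j\}$ is fixed the two halves of the argument apply to every $\beta\neq 0$ and yield the lemma.
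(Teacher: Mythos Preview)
Your proposal is correct and follows essentially the same route as the paper: apply Lemma \ref{01-11-18} to obtain $\Br'$, put a large constant value $\gamma_j$ (the paper writes $t_k$) on the extra arrows at level $j$, and use the diagonal scheme of covering $\beta\in[1/j,j]$ at stage $j$ so that Lemma \ref{19-09-18b} applies for every $\beta>0$ while Lemma \ref{01-11-18b} yields $\phi(A'^{(j)}(\beta))\geq 1/9$ and hence Lemma \ref{30-10-18} applies for every $\beta<0$. The only cosmetic difference is that the paper fixes the tolerances $\epsilon_k$ first (uniformly over $[1/k,k]$) and then chooses $t_k$, whereas you phrase both choices as a single diagonal step; the content is the same.
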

\begin{proof} By Lemma \ref{01-11-18} there is a Bratteli diagram $\Br'$ such that $\Br'_V = \Br_V$ and $\Br'^{(j)} >  \Br^{(j)}$ entry wise,
$$
\phi\left(\Br'^{(j)} - \Br^{(j)} \right) \geq \frac{1}{4} 
$$ 
and $AF(\Br') \simeq U$. If we let $\mathcal A^{(j)}_{v,w}$ and $\mathcal A'^{(j)}_{v,w}$ denote the set of arrows from $w \in \Br_{j-1}$ to $v \in \Br_j$ in $\Br$ and $\Br'$, respectively, we have that 
$$
\mathcal A^{(j)}_{v,w} \subseteq \mathcal A'^{(j)}_{v,w} \ 
$$
since $\Br'^{(j)} \geq \Br^{(j)}$. For each $\beta \in \mathbb R$ we consider the projective matrix system $\{A^{(j)}(\beta)\}$ from \eqref{27-09-18a} corresponding to the given potential $F$. For each $k \in \mathbb N$ we choose $0 <\epsilon_k < 1$ such that
$$
\epsilon_k \leq  2^{-k} \left(L(\beta)_k \prod_{j=1}^{k-1} \left( \left\|A^{(j)}(\beta)\right\| + 1\right)\right)^{-1} \ 
$$
for all $\beta \in  [k^{-1},k]$, where
$$
L(\beta)_k =\sqrt{\# \Br_k} \max_{w \in \Br_k} \left(\left( A^{(1)}(\beta)A^{(2)}(\beta) \cdots A^{(k)}(\beta)\right)_{v_0,w}\right)^{-1} \ .
$$
 Let $t_k$ be a positive real number and extend $F$ to $F' : \Br'_{Ar} \to \mathbb R$ such that
$F'(a) = t_k$ when $a \in \mathcal A'^{(k)}_{v,w} \backslash \mathcal A^{(k)}_{v,w}$, and let 
$$
A'^{(k)}(\beta) = \left(A'^{(k)}(\beta)_{w,v}\right)_{(w,v) \in \Br_{k-1} \times \Br_k} 
$$
be the resulting projective matrix system over $\Br'$, i.e.
$$
A'^{(k)}(\beta)_{w,v} \ = \sum_{a \in \mathcal A'^{(k)}_{v,w}} e^{-\beta F'(a)} \ = \ A^{(k)}(\beta)_{w,v} \ \ + \ \# \left(\mathcal A'^{(k)}_{v,w} \backslash \mathcal A^{(k)}_{v,w}\right)  \ e^{-\beta t_k} \ .
$$
By choosing $t_k$ large enough we arrange that
$$
\left\|A^{(k)}(\beta) - A'^{(k)}(\beta)\right\| \ \leq \ \epsilon_k \ 
$$
for all $\beta \in  [k^{-1},k]$. Note that the matrix
$$
B(k) = \left(\# \left(\mathcal A'^{(k)}_{v,w} \backslash \mathcal A^{(k)}_{v,w}\right)\right)_{(w,v) \in \Br_{k-1}\times \Br_k}
$$
is strictly positive and the transpose of $\Br'^{(k)} -\Br^{(k)}$. By choosing $t_k$ large enough we arrange that
$$
\frac{A^{(k)}(\beta)_{w,v}}{e^{-\beta t_k}  B(k)_{w,v}} \leq \frac{1}{2}
$$
for all $(w,v) \in \Br_{k-1}\times \Br_k$ when $\beta \in [-k,-k^{-1}]$. Using Lemma \ref{01-11-18b} this leads to
\begin{equation*}
\begin{split}
& \phi\left(A'^{(k)}(\beta)\right) \ \geq \ \frac{4}{9} \phi\left( e^{-\beta t_k}B(k)\right) \ = \  \frac{4}{9} \phi\left( B(k)\right) \\
& = \
 \frac{4}{9} \phi\left( \Br'^{(k)} -\Br^{(k)}\right) \ \geq \ \frac{1}{9} \ 
\end{split}
\end{equation*}
for all $\beta \in [-k,-k^{-1}]$. It follows then from Lemma \ref{19-09-18b} that $\varprojlim_j A^{(j)}(\beta)$ is affinely homeomorphic to $\varprojlim_j A'^{(j)}(\beta)$ for all $\beta  >  0$ and from Lemma \ref{30-10-18} that $\varprojlim_j A'^{(j)}(\beta) \simeq \mathbb R^+$ for all $\beta < 0$. The potential $F'$ defines a generalized gauge action on $AF(\Br')$ and it follows from Proposition \ref{21-09-18(90)b} that $\varprojlim_j A'^{(j)}(\beta)$ is affinely homeomorphic to the cone $\mathbb R^+ S^{F'}_{\beta}$ while $\varprojlim_j A^{(j)}(\beta)$ is affinely homeomorphic to the cone $\mathbb R^+ S^{F}_{\beta}$. The isomorphism $AF(\Br') \simeq U$ provides us then with a generalized gauge action $\alpha$ on $U$ with the stated property.
\end{proof}

\begin{lemma}\label{31-10-18} Let $\Br$ and $\Br'$ be Bratteli diagrams with potentials $F:\Br_{Ar} \to \mathbb R$ and $F' : \Br'_{Ar} \to \mathbb R$.
\begin{itemize}
\item The automorphism $\alpha$ group on $AF(\Br) \otimes AF(\Br')$ defined such that $\alpha_t = \alpha^F_t \otimes \alpha^{F'}_{-t}$ is a generalized gauge action.
\item Let $\beta \in \mathbb R$ and assume that there is a unique $(-\beta)$-KMS state $\omega'$ for $\alpha^{F'}$. It follows that the map 
$$
S^F_{\beta}\ni \omega \ \mapsto \ \omega \otimes \omega'
$$
is an affine homeomorphism from $S^F_{\beta}$ onto $S^{\alpha}_{\beta}$.
\end{itemize}
\end{lemma}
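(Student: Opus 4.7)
For the first bullet, the plan is to realise $AF(\Br)\otimes AF(\Br')$ as $AF(\Br\times \Br')$, where $\Br\times \Br'$ is the product Bratteli diagram whose $n$-th level is $\Br_n\times \Br'_n$ and whose arrows from $(v,v')\in \Br_{j-1}\times \Br'_{j-1}$ to $(w,w')\in \Br_{j}\times \Br'_{j}$ are the pairs $(a,a')$ with $a\in r^{-1}(w)\cap s^{-1}(v)$ in $\Br$ and $a'\in r^{-1}(w')\cap s^{-1}(v')$ in $\Br'$. Define a potential $G:(\Br\times \Br')_{Ar}\to \mathbb R$ by $G(a,a')=F(a)-F'(a')$. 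A direct computation on the matrix units $E^n_{\mu,\nu}\otimes E^n_{\mu',\nu'}$ then gives $\alpha^G_t=\alpha^F_t\otimes \alpha^{F'}_{-t}=\alpha_t$, which identifies $\alpha$ as a generalised gauge action.

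For the second bullet, the easy direction is that $\omega\otimes \omega'\in S^\alpha_\beta$ whenever $\omega\in S^F_\beta$: this is the standard verification on analytic tensors $a\otimes b$, using that the $(-\beta)$-KMS condition for $\omega'$ with respect to $\alpha^{F'}$ is literally the $\beta$-KMS condition for $\omega'$ with respect to $(\alpha^{F'}_{-t})_{t\in \mathbb R}$. The assignment $\omega\mapsto \omega\otimes \omega'$ is clearly affine, and it is weak*-continuous because $(\omega\otimes \omega')(a\otimes b)=\omega(a)\omega'(b)$ depends weak*-continuously on $\omega$ on simple tensors and hence on the norm-dense span. Injectivity is immediate from $\omega(a)=(\omega\otimes \omega')(a\otimes 1)$, so it only remains to establish surjectivity; once this is done, compactness of $S^F_\beta$ and Hausdorffness of $S^\alpha_\beta$ promote the bijection to an affine homeomorphism.

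For surjectivity, let $\rho\in S^\alpha_\beta$ and define $\omega(a)=\rho(a\otimes 1)$. Applying the $\beta$-KMS condition for $\rho$ to elements of the form $a\otimes 1$ shows that $\omega\in S^F_\beta$. For fixed $a\in AF(\Br)$ introduce the functional $\psi_a(b)=\rho(a\otimes b)$; using the KMS condition for $\rho$ together with $\alpha_{i\beta}(1\otimes b)=1\otimes \alpha^{F'}_{-i\beta}(b)$, one checks that $\psi_a$ satisfies the $(-\beta)$-KMS identity for $\alpha^{F'}$. When $a\geq 0$, $\psi_a$ is positive, and by the uniqueness hypothesis every non-zero positive $(-\beta)$-KMS functional is a non-negative multiple of $\omega'$, so $\psi_a=\psi_a(1)\omega'=\omega(a)\omega'$. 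Writing an arbitrary $a$ as a linear combination of four positive elements, this extends by linearity to $\psi_a=\omega(a)\omega'$ for all $a\in AF(\Br)$. Hence $\rho(a\otimes b)=\omega(a)\omega'(b)$ on simple tensors, and norm-density of the algebraic tensor product forces $\rho=\omega\otimes \omega'$.

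The main obstacle is precisely this last step: upgrading uniqueness of the $(-\beta)$-KMS \emph{state} of $\alpha^{F'}$ to one-dimensionality of the cone of positive $(-\beta)$-KMS functionals, which is what pins down the partial-evaluation functionals $\psi_a$ as scalar multiples of $\omega'$. Once this proportionality is in hand the rest — construction of the product Bratteli diagram, the verification that $\omega\otimes \omega'$ is KMS, affineness, continuity, injectivity, and compactness — is routine.
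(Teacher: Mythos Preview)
Your argument is correct, and for the first item it coincides with the paper's. For the second item both proofs pivot on the same idea: the partial evaluation $b\mapsto \rho(a\otimes b)$ is a positive $(-\beta)$-KMS functional for $\alpha^{F'}$, so uniqueness forces it to be a multiple of $\omega'$. The difference is in how one guarantees that the relevant $a$'s are plentiful enough. The paper first shows, via the KMS condition on the matrix units $E^n_{\mu,\mu'}\otimes E'^n_{\nu,\nu'}$, that every $\rho\in S^\alpha_\beta$ factors through $P\otimes\id$, where $P$ is the conditional expectation onto the diagonal $D\subseteq AF(\Br)$; it then applies the partial-evaluation argument only for $a$ in the $\alpha^F$-fixed algebra (which contains $D$), where the KMS identity for $\psi_a$ is immediate because $\alpha_{i\beta}(a\otimes 1)=a\otimes 1$. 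You instead observe that $\psi_a$ satisfies the $(-\beta)$-KMS identity for \emph{every} $a$ --- by moving the factor $1\otimes b_1$ rather than $a\otimes 1$ through the KMS relation --- and then use the decomposition of a general $a$ into four positives to reduce to the case where $\psi_a$ is a positive functional. Your route is a little more elementary in that it avoids the diagonal conditional expectation and the finite-dimensional Gibbs-state computation hiding behind the paper's first displayed equality; the paper's route, on the other hand, makes the factorisation through the diagonal explicit, which is sometimes useful information in its own right.
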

\begin{proof} $AF(\Br) \otimes AF(\Br')$ is $*$-isomorphic to $AF(\Br \times \Br')$ where 
$$
\left(\Br\times \Br'\right)_j = \Br_j \times \Br'_j
$$ 
with multiplicity matrices given by $\left(\Br \times \Br'\right)^{(j)}_{(v,w), (v',w')} = \Br^{(j)}_{v,v'}\Br'^{(j)}_{w,w'}$. Under this $*$-isomorphism $\alpha$ corresponds to the generalized gauge action defined by a potential $F''$ such 
$$F''(a,a')= F(a) - F'(a')$$ 
when $(a,a') \in \left(\Br \times \Br'\right)_{Ar} \subseteq \Br_{Ar} \times \Br'_{Ar}$. To prove the second item it suffices to show that the map in the statement is surjective. Let $\left\{E^n_{\mu,\mu'}\right\}$ and $\left\{E'^n_{\nu,\nu'}\right\}$ be the matrix units from \eqref{22-09-18} in $AF(\Br)$ and $AF(\Br')$, respectively. Let $P : AF(\Br) \to D$ be the canonical conditional expectation onto the diagonal $D \subseteq AF(\Br)$. When $\omega$ is a $\beta$-KMS state for $\alpha$ we find that
\begin{equation*}
\begin{split}
&\omega\left( E^n_{\mu,\mu'} \otimes E^n_{\nu,\nu'}\right) = \delta_{\mu,\mu'} \delta_{\nu,\nu'} \omega\left(E^n_{\mu, \mu} \otimes E^n_{\nu,\nu}\right) = \omega \left( P(E^n_{\mu, \mu'}) \otimes E'^n_{\nu, \nu'}\right) \ ,
\end{split}
\end{equation*}
showing that $\omega$ factorises through $P \otimes \id_{AF(\Br')}$. Let $a \in AF(\Br)$ be a positive element fixed by $\alpha^F$. Then $b \mapsto \omega(a  \otimes b)$ is a positive $(-\beta)$-KMS functional for $\alpha^{F'}$ and hence  
$$
\omega( a \otimes b) = \lambda(a) \omega'(b) \ \ \forall b \in AF(\Br')
$$
for some $\lambda(a) \geq 0$. Since the diagonal $D \subseteq AF(\Br)$ is in the fixed point algebra of $\alpha^F$ and $\omega$ factorises through $P \otimes \id_{AF(\Br')}$ it follows that
$\omega = \omega'' \otimes \omega'$ where $\omega'' \in S^F_{\beta}$.

\end{proof}

\begin{thm}\label{25-09-18f} Let $\Br$ be a Bratteli diagram and $\alpha^F$ a generalized gauge action on $AF(\Br)$. There is a generalized gauge action $\alpha$ on the UHF algebra $U$ such that for all $\beta \neq 0$ the simplex cone $\mathbb R^+S^{\alpha}_{\beta}$ is affinely homeomorphic to $\mathbb R^+S^F_{\beta}$.
\end{thm}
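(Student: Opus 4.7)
The plan is to apply the tensor-product construction of Lemma \ref{31-10-18} in combination with two symmetric applications of Lemma \ref{23-09-18f}, one handling positive $\beta$ and one handling negative $\beta$.

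First I split $U$ as a tensor product $U \simeq U_1 \otimes U_2$ of two infinite-dimensional UHF algebras, which is always possible: if some prime $p$ divides the supernatural number of $U$ to an infinite power take $U_2 = M_{p^\infty}$ and $U_1 = U$; otherwise the supernatural number involves infinitely many distinct primes and we partition those primes into two infinite subsets. The proof of Lemma \ref{01-11-18} uses only the multiplicity sequence $\{d_j\}$ of the target UHF, so Lemma \ref{01-11-18} and therefore Lemma \ref{23-09-18f} generalize without change to produce their output on any prescribed infinite-dimensional UHF algebra.

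I then apply the generalized Lemma \ref{23-09-18f} to $\alpha^F$ with target $U_1$ to obtain a generalized gauge action $\alpha^{F_+}$ on $U_1$ with $\mathbb R^+ S^{F_+}_\beta \simeq \mathbb R^+ S^F_\beta$ for $\beta > 0$ and $\mathbb R^+ S^{F_+}_\beta \simeq \mathbb R^+$ for $\beta < 0$. Similarly I apply it to the potential $-F$ on $\Br$ with target $U_2$; since time reversal $t \mapsto -t$ interchanges $\beta$-KMS and $(-\beta)$-KMS states we have $S^{-F}_\beta = S^F_{-\beta}$, and the resulting action $\alpha^{F_-}$ on $U_2$ satisfies $\mathbb R^+ S^{F_-}_\beta \simeq \mathbb R^+ S^F_{-\beta}$ for $\beta > 0$ and $\simeq \mathbb R^+$ for $\beta < 0$. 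Forming $\alpha_t = \alpha^{F_+}_t \otimes \alpha^{F_-}_{-t}$ on $U_1 \otimes U_2 \simeq U$ gives a generalized gauge action on $U$ by the first item of Lemma \ref{31-10-18}.

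For $\beta > 0$, $\alpha^{F_-}$ has a unique $(-\beta)$-KMS state because $-\beta < 0$ lies in its trivial regime, and the second item of Lemma \ref{31-10-18} yields $S^\alpha_\beta \simeq S^{F_+}_\beta \simeq S^F_\beta$. For $\beta < 0$ I rewrite $\alpha_t = \alpha^{-F_-}_t \otimes \alpha^{-F_+}_{-t}$ by commuting the tensor factors and replacing each potential by its negative; then $\alpha^{-F_+}$ has a unique $(-\beta)$-KMS state since its $(-\beta)$-KMS simplex equals $S^{F_+}_\beta$, which is a singleton for $\beta < 0$. A second application of Lemma \ref{31-10-18}(2) gives $S^\alpha_\beta \simeq S^{-F_-}_\beta = S^{F_-}_{-\beta} \simeq S^F_\beta$. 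Passing from state simplexes to ambient positive cones by extending the map $\omega \mapsto \omega \otimes \omega'$ linearly delivers the required affine homeomorphism $\mathbb R^+ S^\alpha_\beta \simeq \mathbb R^+ S^F_\beta$ for every $\beta \neq 0$.

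The main obstacle I expect is the preliminary bookkeeping in the first step: writing $U$ as a non-trivial tensor product of two infinite-dimensional UHF algebras and verifying that the construction of Lemma \ref{23-09-18f} is genuinely flexible in its UHF target. Both points are routine once stated, but are essential, because the naive approach of applying Lemma \ref{23-09-18f} twice with target $U$ would produce an action living on $U \otimes U$, which need not be isomorphic to $U$ when the supernatural number of $U$ has a prime with finite multiplicity. With that flexibility established, the substantive content is the symmetric two-sided application of Lemma \ref{31-10-18} sketched above.
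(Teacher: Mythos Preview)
Your proof is correct and follows essentially the same route as the paper: split $U$ as a tensor product of two UHF algebras, apply Lemma~\ref{23-09-18f} to $F$ and to $-F$ on the two factors, and combine via Lemma~\ref{31-10-18} with $\alpha_t = \alpha^+_t \otimes \alpha^-_{-t}$. You are more explicit than the paper about the splitting $U \simeq U_1 \otimes U_2$ and about why Lemma~\ref{23-09-18f} applies to an arbitrary UHF target, points the paper leaves implicit.
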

\begin{proof}  Write $U = U^+ \otimes U^-$ where $U^{\pm}$ are both UHF algebras. It follows from Lemma \ref{23-09-18f} that there are generalized gauge actions $\alpha^{\pm}$ on $U^{\pm}$ such that $\mathbb R^+ S^{\alpha^{\pm}}_{\beta}$ is affinely homeomorphic to $\mathbb R^+ S^{\pm F}_{\beta}$ when $\beta > 0$ while $S^{\alpha^{\pm}}_{\beta}$ only contains one element when $\beta \leq 0$. It follows then from Lemma \ref{31-10-18} that $\alpha_t = \alpha^+_t \otimes \alpha^-_{-t}$ is a generalized gauge action on $U$ with the stated properties.
\end{proof}

By combining Theorem \ref{25-09-18f} with Proposition \ref{13-10-18c} we obtain the following result.

\begin{thm}\label{CAR1} Let $U$ be a UHF algebra and let $\mathbb I$ be a finite or countably infinite collection of intervals in $\mathbb R$ such that $I_0 = \mathbb R$ for at least one element $I_0 \in \mathbb I$. For each $I \in \mathbb I$ choose a simplex cone $L_I$ and for $\beta \in \mathbb R$ set  $\mathbb I_{\beta} = \{I \in \mathbb I: \ \beta \in I \}$. There is a generalized gauge action $\alpha$ on $U$ with the following properties.
\begin{itemize}
\item For each $I \in \mathbb I$ and each $\beta \in I\backslash \{0\}$ there is a closed face $L'_I$ in $\mathbb R^+S^{\alpha}_{\beta}$ and an affine homeomorphism $L_I \simeq L'_I$.
\item For each $\beta \neq 0 $ and each positive $\beta$-KMS functional $\omega \in \mathbb R^+S^{\alpha}_{\beta}$ there is a unique norm-convergent decomposition
$$
\omega = \sum_{I \in \mathbb I_{\beta}} \omega_I \ 
$$
where $\omega_I \in L'_I$.
\end{itemize}
\end{thm}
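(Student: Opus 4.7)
The plan is simply to combine Proposition \ref{13-10-18c} with Theorem \ref{25-09-18f}. First, apply Proposition \ref{13-10-18c} to the given data $\mathbb{I}$ and $\{L_I\}_{I\in\mathbb{I}}$ to obtain a Bratteli diagram $\Br$, a potential $F : \Br_{Ar} \to \mathbb{R}$, and, for each $I \in \mathbb{I}$ and each $\beta \in I$, a closed face $L_I^{\flat} \subseteq \mathbb{R}^+ S^F_\beta$ affinely homeomorphic to $L_I$, with the stated unique norm-convergent decomposition of every positive $\beta$-KMS functional of $\alpha^F$. Then apply Theorem \ref{25-09-18f} to the pair $(\Br, F)$ to produce a generalized gauge action $\alpha$ on $U$ together with an affine homeomorphism $\Phi_\beta : \mathbb{R}^+ S^F_\beta \to \mathbb{R}^+ S^\alpha_\beta$ for every $\beta \neq 0$.

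For each $I \in \mathbb{I}$ and each $\beta \in I \setminus \{0\}$ I would define $L'_I := \Phi_\beta(L_I^{\flat})$; since $\Phi_\beta$ is an affine homeomorphism onto its image, $L'_I$ is a closed face of $\mathbb{R}^+ S^\alpha_\beta$ affinely homeomorphic to $L_I$, which settles the first bullet. For the second, given $\omega \in \mathbb{R}^+ S^\alpha_\beta$, let $\widetilde{\omega} = \Phi_\beta^{-1}(\omega) \in \mathbb{R}^+ S^F_\beta$, decompose $\widetilde{\omega} = \sum_{I \in \mathbb{I}_\beta} \widetilde{\omega}_I$ with $\widetilde{\omega}_I \in L_I^{\flat}$ by Proposition \ref{13-10-18c}, and set $\omega_I := \Phi_\beta(\widetilde{\omega}_I) \in L'_I$. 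All the affine homeomorphisms that together constitute $\Phi_\beta$ (those of Proposition \ref{21-09-18(90)b}, Lemma \ref{19-09-18b}, and the tensoring-with-a-unique-state map of Lemma \ref{31-10-18}) send $0$ to $0$, so $\Phi_\beta$ is additive and positively homogeneous on the cone $\mathbb{R}^+ S^F_\beta$, which means the identity $\omega = \sum_{I \in \mathbb{I}_\beta \cap F} \omega_I + \Phi_\beta(\widetilde{\omega} - \sum_{I \in \mathbb{I}_\beta \cap F} \widetilde{\omega}_I)$ holds on every finite $F \subseteq \mathbb{I}_\beta$, and uniqueness of the decomposition for $\omega$ is inherited from uniqueness for $\widetilde{\omega}$ through the bijection $\Phi_\beta$.

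The one point that warrants care is that this transported decomposition remains \emph{norm}-convergent in $U^*$ and not merely weak*-convergent. Since the summands are positive functionals, the norm equals evaluation at the unit, so what is required is $\sum_{I \in \mathbb{I}_\beta} \omega_I(1) = \omega(1)$. Unwinding the constructions behind Theorem \ref{25-09-18f} (via Lemma \ref{23-09-18f} and Lemma \ref{31-10-18}), the value $\omega(1)$ of a positive $\beta$-KMS functional for $\alpha$ equals the $v_0$-coordinate of the corresponding element of the inverse-limit cone $\varprojlim_j A'^{(j)}(\beta)$ provided by Lemma \ref{19-09-18b} and Proposition \ref{21-09-18(90)b}, and every one of the intertwining affine homeomorphisms in sight is continuous for the product (that is, coordinate-wise) topology on such inverse limits. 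The decomposition of Proposition \ref{13-10-18c} is by its construction coordinate-wise convergent on $\Br_V$ (cf.\ Remark \ref{28-10-18}), so evaluating at $v_0$ after pushing through $\Phi_\beta$ yields $\sum_I \omega_I(1) = \omega(1)$, which for positive functionals is precisely norm convergence. The main obstacle is exactly this last step: $\Phi_\beta$ is only \emph{a priori} weak*-continuous, and one must observe that for positive functionals the norm is controlled by a single coordinate in the inverse-limit picture.
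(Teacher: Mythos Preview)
Your approach is exactly what the paper does: it states Theorem~\ref{CAR1} immediately after Theorem~\ref{25-09-18f} with the single sentence ``By combining Theorem~\ref{25-09-18f} with Proposition~\ref{13-10-18c} we obtain the following result,'' and gives no further argument. Your additional care about transporting the norm-convergence of the decomposition through the weak*-continuous affine homeomorphism $\Phi_\beta$ is a detail the paper leaves implicit; your resolution (partial sums converge weak* to $\omega$ by continuity of $\Phi_\beta$, and for an increasing sequence of positive functionals weak* convergence at $1$ forces norm convergence) is correct, though it can be said more directly than via the $v_0$-coordinate.
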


\begin{cor}\label{25-10-18k} Let $U$ be a UHF algebra and let $\mathbb I$ be a finite or countably infinite collection of intervals in $\mathbb R$ such that $I_0 = \mathbb R$ for at least one element $I_0 \in \mathbb I$. For each $I \in \mathbb I$ choose a metrizable Choquet simplex $S_I$ and for $\beta \in \mathbb R$ set  $\mathbb I_{\beta} = \{I \in \mathbb I: \ \beta \in I \}$. There is a generalized gauge action $\alpha$ on $U$ with the following properties.
\begin{itemize}
\item For each $I \in \mathbb I$  and each $\beta \in I\backslash \{0\}$ there is a closed face $F_I$ in $S^{\alpha}_{\beta}$ which is strongly affinely isomorphic to $S_I$. 
\item For each $\beta \neq 0$ and each $\beta$-KMS state $\omega \in S^{\alpha}_{\beta}$ there is a unique norm-convergent decomposition
$$
\omega = \sum_{I \in \mathbb I_{\beta}} \omega_I \ 
$$
where $\omega_I \in \mathbb R^+F_I$. 
\end{itemize}
\end{cor}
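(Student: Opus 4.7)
The plan is to deduce the corollary from Theorem \ref{CAR1} by promoting each metrizable Choquet simplex $S_I$ to a simplex cone having $S_I$ as a compact base. Concretely, I would invoke Blackadar's Theorem \ref{24-10-18} (or rather Theorem 3.10 of \cite{Bl} which it packages) to realize $S_I$ as the tracial state space of a simple unital AF algebra $A_I$, so that $L_I := \mathcal{T}^+(A_I)$ is a simplex cone with compact base $S_I$. Applying Theorem \ref{CAR1} to the family $\{L_I\}_{I \in \mathbb{I}}$ produces a generalized gauge action $\alpha$ on $U$ together with, for each $I \in \mathbb{I}$ and each $\beta \in I \setminus \{0\}$, a closed face $L'_I \subseteq \mathbb{R}^+ S^{\alpha}_{\beta}$ and an affine homeomorphism $L_I \simeq L'_I$.

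For the first bullet, fix $\beta \in I \setminus \{0\}$ and set $F_I := L'_I \cap S^{\alpha}_{\beta}$. Being a closed face of the cone $\mathbb{R}^+ S^{\alpha}_{\beta}$, the set $L'_I$ is a subcone closed under non-negative scaling, so every nonzero $\omega \in L'_I$ is a positive multiple of some $\nu \in S^{\alpha}_{\beta}$, and the face property forces $\nu \in F_I$; hence $F_I$ is a compact base of $L'_I$. If $\omega \in F_I$ and $\omega = t\omega_1 + (1-t)\omega_2$ with $\omega_1,\omega_2 \in S^{\alpha}_{\beta}$ and $t \in (0,1)$, the face property of $L'_I$ inside $\mathbb{R}^+ S^{\alpha}_{\beta}$ yields $\omega_1,\omega_2 \in L'_I$, so $\omega_1,\omega_2 \in F_I$; thus $F_I$ is a closed face of $S^{\alpha}_{\beta}$. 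Under the affine homeomorphism $L_I \simeq L'_I$ the base $S_I$ transports to a second compact base of $L'_I$, and by Lemma \ref{24-10-18b}(1) any two compact bases of the same simplex cone are strongly affinely isomorphic, giving the required strong affine isomorphism $S_I \simeq F_I$.

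For the second bullet, take any $\beta \neq 0$ and any $\omega \in S^{\alpha}_{\beta} \subseteq \mathbb{R}^+ S^{\alpha}_{\beta}$. Theorem \ref{CAR1} supplies a unique norm-convergent decomposition $\omega = \sum_{I \in \mathbb{I}_{\beta}} \omega_I$ with $\omega_I \in L'_I = \mathbb{R}^+ F_I$, which is exactly the required decomposition; uniqueness is inherited from uniqueness in Theorem \ref{CAR1}.

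There is essentially no new difficulty beyond bookkeeping on top of Theorem \ref{CAR1}; the only genuine subtlety, already illustrated by Example \ref{24-10-18a}, is that the affine homeomorphism $L_I \simeq L'_I$ need not carry the chosen base $S_I$ onto the intrinsic base $F_I$, and this is precisely why the conclusion must be phrased in terms of strong affine isomorphism rather than affine homeomorphism of Choquet simplexes.
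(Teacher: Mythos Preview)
Your proof is correct and follows essentially the same route as the paper: realize each $S_I$ as a base of a simplex cone $L_I$, apply Theorem~\ref{CAR1}, set $F_I = L'_I \cap S^{\alpha}_{\beta}$, and invoke Lemma~\ref{24-10-18b} for the strong affine isomorphism. Your version simply spells out in more detail why $F_I$ is a closed face of $S^{\alpha}_{\beta}$ and a compact base of $L'_I$; the appeal to Blackadar's theorem to build $L_I$ is slightly more than needed (any embedding of $S_I$ as a compact base of a simplex cone suffices), but it is certainly correct.
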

\begin{proof} Realize $S_I$ as a base of a simplex cone $L_I$ and apply Theorem \ref{CAR1}. Set $F_I = L'_I \cap S^{\alpha}_{\beta}$. Then $F_I$ is a base in $L'_I$ and a closed face in $S^{\alpha}_{\beta}$, and $F_I$ is strongly affinely isomorphic to $S_I$ by Lemma \ref{24-10-18b}.
\end{proof}

\begin{remark}\label{27-10-18a} In the setting of Corollary \ref{25-10-18k} the extreme boundary $\partial_e S^{\alpha}_{\beta}$ of $S^{\alpha}_{\beta}$ can be identified  as a set with the disjoint union
\begin{equation}\label{27-10-18}
\bigsqcup_{I \in \mathbb I_{\beta}} \partial_e S_I \ .
\end{equation}
As a topological space, however, the extreme boundary $\partial_e S_{\beta}^{\alpha}$ will generally not be equal to the topological disjoint union \eqref{27-10-18}. Concerning the topology of $\partial_e S_{\beta}^{\alpha}$ we make the following observations.
\begin{itemize}
\item The set $\partial_e S_I$ is closed in $\partial_e S_{\beta}^{\alpha}$ for all $I \in \mathbb I_{\beta}$ and open for all $I \in \mathbb I_{\beta} \backslash \{I_0\}$, but $\partial_e S_{I_0}$ is not necessarily open in $\partial_e S_{\beta}^{\alpha}$ when $\mathbb I_{\beta}$ is infinite.
\item  When $\mathbb I_{\beta}$ is a finite set the topological space $\partial_e S_{\beta}^{\alpha}$ will be homeomorphic to the topological disjoint union \eqref{27-10-18}.
\item When $\mathbb I_{\beta}$ is infinite, $S_{I_0}$ only contains one point and $S_I$ is a Bauer simplex for all $I \in \mathbb I_{\beta}$ the simplex $S^{\alpha}_{\beta}$ is Bauer and $\partial_e S_{\beta}^{\alpha}$ is homeomorphic to the one-point compactification of the topological disjoint union
\begin{equation}\label{30-10-18b}
\bigsqcup_{I \in \mathbb I_{\beta} \backslash \{I_0\}} \partial_e S_I \ .
\end{equation}
The element in $S_{I_0}$ will be the point at infinity. See Proposition \ref{13-11-18}.
\end{itemize}

\end{remark}

\section{Extreme variation of KMS simplexes} 

In this section we prove Theorem \ref{CAR2} from the introduction and we combine it with the examples of Bratteli, Elliott and Herman from \cite{BEH}. The methods are identical to those used in \cite{Th2}. 

\smallskip

\emph{Proof of Theorem \ref{CAR2}:}
  Take $\mathbb I$ to be a countable collection of intervals in $\mathbb R$ such that 
\begin{itemize}
\item $\mathbb I$ contains the interval $\mathbb R$, 
\item $\mathbb I_{\beta} \neq \mathbb I_{\beta'}$ when $\beta \neq \beta'$, where $\mathbb I_{\beta} = \left\{I \in \mathbb I: \ \beta \in I \right\}$, and
\item $\# \mathbb I_{\beta} \geq 2$ when $\beta \neq 0$.
\end{itemize}
For example, $\mathbb I$ could consist of $\mathbb R$ and all bounded intervals with rational endpoints. Choose an interval $I_0 \in \mathbb I$ such that $I_0 = \mathbb R$, and for each $I \in \mathbb I \backslash \{I_0\}$ choose an infinite compact connected metric space $X_I$ such that $X_{I}$ is not homeomorphic to $X_{I'}$ when $I \neq I'$. In \cite{Th2} this was done by taken spaces with different covering dimensions. Let $X_{I_0}$ be the one-point set. For each $I \in \mathbb I$ let $S_I$ be the Bauer simplex of Borel probability measures on $X_I$. Apply Corollary \ref{25-10-18k} with these choices and note that the simplexes $S^{\alpha}_{\beta}$ are Bauer and the extreme boundary $\partial_e S^{\alpha}_{\beta}$ is  homeomorphic to the one-point compactification of the topological disjoint union \eqref{30-10-18b}. Since $S^{\alpha}_{\beta}$ is only affinely homeomorphic to $S^{\alpha}_{\beta'}$ if their extreme boundaries are homeomorphic, the choice of spaces implies that this only happens if $\mathbb I_{\beta} = \mathbb I_{\beta'}$. By the choice of intervals this implies that $\beta = \beta'$.

\qed

 If $U$ and $\alpha$ are as in Theorem \ref{CAR2} and $\gamma$ is the one-parameter group on the $C^*$-algebra $B$ from Theorem 3.2 in \cite{BEH} applied with the appropriate subset of $[0,1]$, the tensor product action $ \alpha' = \alpha \otimes \gamma$ on $A = U\otimes B$ will have the properties specified in the following theorem.
 
 \begin{thm}\label{05-08-18a} Let $F$ be a closed subset of the real numbers $\mathbb R$. There is a simple unital $C^*$-algebra with a continuous one-parameter group $\alpha'$ of automorphisms such that the Choquet simplex $S^{\alpha'}_{\beta}$ of $\beta$-KMS states for $\alpha'$ is non-empty if and only if $\beta \in F$, and for $\beta,\beta' \in F$ the simplexes $S^{\alpha'}_{\beta}$ and $S^{\alpha'}_{\beta'}$ are only affinely homeomorphic to each other when $\beta = \beta'$.
\end{thm}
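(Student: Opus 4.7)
The plan is to realise $\alpha'$ as a tensor-product action built from the flow $\alpha$ on the UHF algebra $U$ provided by Theorem \ref{CAR2} and a flow $\gamma$ on a simple unital $C^*$-algebra $B$ from \cite{BEH} whose admissible inverse temperatures are exactly the elements of $F$. Specifically, let $U$ and $\alpha$ be as in Theorem \ref{CAR2}, so that $S^{\alpha}_\beta$ is an infinite dimensional Bauer simplex for every $\beta \neq 0$ and $S^\alpha_\beta$ is not affinely homeomorphic to $S^\alpha_{\beta'}$ whenever $\beta \neq \beta'$ are non-zero, while $S^\alpha_0$ is the singleton consisting of the unique trace on $U$. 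Apply Theorem 3.2 of \cite{BEH}, after an affine reparametrisation of the time variable if necessary, to produce a simple unital $C^*$-algebra $B$ with a continuous one-parameter group $\gamma$ such that $S^{\gamma}_\beta$ is a singleton when $\beta \in F$ and empty when $\beta \notin F$.

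Set $A = U \otimes B$ and $\alpha'_t = \alpha_t \otimes \gamma_t$. Since $U$ is nuclear, the tensor product is unambiguous, and because $U$ and $B$ are both simple and unital, $A$ is simple and unital; clearly $\alpha'$ is a continuous one-parameter group of automorphisms on $A$. The key step is to show that
\begin{equation*}
S^{\alpha'}_\beta \ \simeq \ S^{\alpha}_\beta \quad \text{when } \beta \in F, \qquad S^{\alpha'}_\beta = \emptyset \ \text{when } \beta \notin F.
\end{equation*}
Given any $\beta$-KMS state $\omega$ of $\alpha'$, the map $b \mapsto \omega(1 \otimes b)$ satisfies the $\beta$-KMS condition for $\gamma$ (direct verification on analytic $b$), so if $\beta \notin F$ no such $\omega$ exists. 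For $\beta \in F$, uniqueness forces this restriction to be the unique $\beta$-KMS state $\omega_B$ on $B$, which is therefore extremal in $S^\gamma_\beta$ and hence a factor state. A standard factor-state argument---in the GNS representation of $\omega_B$ the algebra $B$ generates a factor, so partial evaluation of $\omega$ against $\omega_B$ on the $B$-tensor factor produces a well-defined positive linear functional on $A$ which one checks to be a $\beta$-KMS state of $\alpha$---then gives $\omega = \omega_A \otimes \omega_B$ for a unique $\omega_A \in S^\alpha_\beta$. Conversely, every such product is a $\beta$-KMS state for $\alpha'$, and the resulting affine homeomorphism $S^{\alpha'}_\beta \simeq S^\alpha_\beta$ follows.

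Combining these two items yields the conclusion. For $\beta \neq \beta'$ in $F$, if both are non-zero then $S^{\alpha'}_\beta \simeq S^\alpha_\beta \not\simeq S^\alpha_{\beta'} \simeq S^{\alpha'}_{\beta'}$ by Theorem \ref{CAR2}; if one of them is $0$, one of the two simplexes is a singleton while the other is an infinite dimensional Bauer simplex, so they are certainly not affinely homeomorphic. The principal technical obstacle is the factorisation $\omega = \omega_A \otimes \omega_B$ via the factor-state property of $\omega_B$; the remainder amounts to bookkeeping using the properties already established for $\alpha$ in Theorem \ref{CAR2} together with the input from \cite{BEH}.
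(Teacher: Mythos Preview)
Your approach coincides with the paper's: tensor the action $\alpha$ from Theorem~\ref{CAR2} with the flow $\gamma$ on the algebra $B$ from Theorem~3.2 of \cite{BEH}, and deduce $S^{\alpha\otimes\gamma}_\beta\simeq S^\alpha_\beta$ for $\beta\in F$ while $S^{\alpha\otimes\gamma}_\beta=\emptyset$ for $\beta\notin F$. The paper gives no further detail and refers to Theorem~12.1 of \cite{Th2}.

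Two points in your write-up deserve tightening. First, the factorisation $\omega=\omega_A\otimes\omega_B$ does \emph{not} follow from the bare fact that $\omega|_B$ is a factor state: a maximally entangled vector state on $M_2\otimes M_2$ has tracial (hence factorial) marginals yet is pure and non-product, and ``partial evaluation against $\omega_B$'' is not a well-defined operation here. The KMS condition is essential. A clean argument, parallel to the proof of Lemma~\ref{31-10-18}, exploits that $\alpha$ is a generalised gauge action on $U$: the diagonal projections $E^n_{\mu,\mu}$ are $\alpha$-fixed, so the KMS identity forces $\omega(E^n_{\mu,\mu'}\otimes b)=0$ for $\mu\neq\mu'$ and all $b\in B$, whence $\omega$ factors through $P\otimes\id_B$; then for $a$ in the diagonal the functional $b\mapsto\omega(a\otimes b)$ is a positive $\beta$-KMS functional for $\gamma$, hence a multiple of $\omega_B$, and the product decomposition follows. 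Second, an affine reparametrisation of time only rescales the KMS spectrum affinely and cannot transform a subset of $[0,1]$ into an arbitrary closed $F\subseteq\mathbb R$; the input to the construction in \cite{BEH} must be chosen appropriately from the start (as the paper's phrase ``the appropriate subset of $[0,1]$'' indicates).
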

 
The proof is the same as the proof of Theorem 12.1 in \cite{Th2} and will not be repeated. For non-empty $F$ the algebra $A$ in Theorem \ref{05-08-18a} is a corner in a crossed product $B \rtimes \mathbb Z$ with $B$ a simple non-unital AF algebra. 

It was shown by Matui and Sato in the proof of Theorem 6.3 in \cite{MS} that there is a one-parameter group of automorphisms without ground states on any simple unital infinite-dimensional AF algebra with a unique trace state, thus disproving the Powers-Sakai conjecture, \cite{PS}. For these actions there must be an upper bound on the inverse temperatures, i.e. for all sufficient large $\beta$ there are no $\beta$-KMS states. In view of this it is natural to wonder if it is really necessary to leave the realm of UHF algebras to realize a field of KMS states as in Theorem \ref{05-08-18a}. More explicitly:

\begin{q} Which closed subsets of $\mathbb R$ can be realized as the set of inverse temperatures for a one-parameter group of automorphisms on a UHF algebra ?
\end{q}

\end{document}